\DeclareRobustCommand*{\mfaktor}[3][]
{
   { \mathpalette{\mfaktor@impl@}{{#1}{#2}{#3}} }
}
\newcommand*{\mfaktor@impl@}[2]{\mfaktor@impl#1#2}
\newcommand*{\mfaktor@impl}[4]{
   \settoheight{\faktor@zaehlerhoehe}{\ensuremath{#1#2{#3}}}%
   \settoheight{\faktor@nennerhoehe}{\ensuremath{#1#2{#4}}}%
      \raisebox{-0.5\faktor@zaehlerhoehe}{\ensuremath{#1#2{#3}}}%
      \mkern-4mu\diagdown\mkern-5mu%
      \raisebox{0.5\faktor@nennerhoehe}{\ensuremath{#1#2{#4}}}%
}
\newtheoremstyle{case}{}{}{}{}{}{:}{ }{}
\theoremstyle{case}
\numberwithin{equation}{section}
\def\Z{\mathbf{Z}}
\def\Q{\mathbf{Q}}
\def\C{\mathbf{C}}
\def\Gal{\mathrm{Gal}}
\def\Qbar{\overline{\Q}}
\def\R{\mathbf{R}}
\DeclareMathOperator\Spec{Spec}
\newtheorem{theorem}{Theorem}[section]
\newtheorem{lemma}[theorem]{Lemma}
\newtheorem{remark}[theorem]{Remark}
\newtheorem{conj}[theorem]{Conjecture}
\def\Aut{\mathrm{Aut}}
\address{Joel Specter, Department of Mathematics, Johns Hopkins University, 3400 N. Charles Street, Baltimore, MD 21218, United States}
\email{jspecter@jhu.edu}
\subjclass[2010]{11R32, 37P15}
\begin{document}

\title[Surjective Arboreal Galois Representations]{Polynomials with Surjective Arboreal Galois Representations Exist in Every Degree}
\author{Joel Specter}
\begin{abstract}
Let~$E$ be a Hilbertian field of {{characteristic~$0$}}. R.W.K. Odoni conjectured that for every positive integer~$n$ there exists a polynomial~$f\in E[X]$ of degree~$n$ such that each iterate~$f^{\circ{k}}$ of~$f$ is irreducible and the Galois group of the splitting field of~$f^{\circ k}$ is isomorphic to the automorphism group of a regular,~$n$-branching tree of height~$k.$ We prove this conjecture when~$E$ is a number field.
\end{abstract}
\maketitle 

\section{Introduction}
\label{intro}
Given a polynomial~$f \in \Q[X],$ the roots of~$f$ are the most evident set on which the absolute Galois group acts. This note concerns the Galois action on the second most evident set: the set of roots of all compositional iterates of~$f.$ 

We begin by establishing some notation. All fields considered in this note have characteristic~$0$. If~$F$ is a field and~$f\in F[X]$ is a polynomial, for each positive integer~$k,$ we denote the~$k$-th iterate of~$f$ under composition by~$f^{\circ k}.$ The set of all pre-images of~$0$ under the iterates of~$f$ is denoted
~$$T_f:= \coprod_{k=0}^{\infty} \{ r \in \overline{F}: f^{\circ k}(r) = 0\}.$$
To organize~$T_f,$ we give it the structure of a rooted tree: a zero~$r_k$ of~$f^{\circ k}$ is connected to a zero~$r_{k-1}$ of~$f^{\circ(k-1)}$ by an edge if~$f(r_k) = r_{k-1}.$ We call~$T_f$ the pre-image tree of~$0.$ The absolute Galois group~$G_F$  of~$F$ acts on~$T_f$ by tree automorphisms. The resulting map~$$\rho_f: G_{F} \rightarrow \Aut(T_f)$$ is called the arboreal Galois representation associated to~$f.$ We will say~$\rho_f$ is \emph{regular} if~$T_f$ is a regular, rooted tree of degree equal to the degree of~$f.$

Interest in arboreal Galois representations originates from the study of prime divisors appearing in the numerators of certain polynomially-defined recursive sequences. Explicitly, given a polynomial~$f \in \Q[X]$ and an element~$c_0 \in \Q,$ one wishes to understand the density of the set of primes~$$S_{f, c_0} := \{p: v_p(f^{\circ n}(c_0)) >0  \text{ for some value of } n \}$$ inside the set of all prime integers. An observation, first made by Odoni in \cite{odoni1985prime}, is that one may bound this density from above using Galois theory. Specifically, if one excludes the primes~$p$ for which~$c_0$ and~$f$ are not~$p$-integral, a prime~$p$ is contained in~$S_{f, c_0}$ if and only if~$c_0$ is a root of some iterate of~$f \mod p.$ By the Chebotarev Density Theorem, the proportion of primes~$p$ for which~${f^{\circ k} \mod p}$ has a root is determined by the image of~$\rho_f.$  As a general principle, if a polynomial has an arboreal Galois representation with \textit{large} image, then \textit{few} primes appear in~$S_{f,c_{0}}.$ For specific results, we refer the reader to \cite{odoni1985prime} or \cite{jones2008density}.

In~\cite{odoni1985Galois}, Odoni showed that for any field~$F$ of characteristic~$0,$ the arboreal Galois representation associated to the generic monic, degree~$n$ polynomial~$$f_{gen}(X) := X^{n}  + a_{n-1}X^{n-1} + \ldots + a_1X + a_0 \in F(a_{n-1}, \ldots, a_0)[X]$$ is regular and surjective.\footnote{Jamie Juul has shown that the arboreal Galois representation associated to the generic monic, degree~$n$ polynomial over a field~$F$ of any characteristic is regular and surjective under the assumption that the characteristic of~$F$ and the degree~$n$ do not both equal~$2$ \cite{juul2014iterates}.} When~$F$ is Hilbertian, for example when~$F = \Q$, one expects that \textit{most} monic, degree~$n$ polynomials behave like~$f_{gen}$. Indeed, this expectation holds true for any finite number of iterates: for each~$k > 0,$ the set of monic, degree~$n$ polynomials~$f$ such that the Galois group of~$f^{\circ k}$ over~$F$ is smaller than the Galois group of~$f^{\circ k}_{gen}$ over~$F(a_{n-1},\ldots, a_0)$ is thin. Alas, in general, the intersection of the complement of countably many thin sets may be empty; therefore, Odoni's theorem does not imply the existence of any specialization with surjective arboreal Galois representation. He conjectures that such specializations exist.

\begin{conj}[\cite{odoni1985Galois}, Conjecture 7.5]\label{odoni's Conjecture}  Let~$E$ be a Hilbertian field of {characteristic~$0$}. For each positive integer~$n,$ there exists a monic, degree~$n$ polynomial~$f \in E[X]$ such that every iterate of~$f$ is irreducible and the associated arboreal Galois representation~$$\rho_{f}: G_{E} \rightarrow \Aut(T_f)$$ is surjective. 
\end{conj}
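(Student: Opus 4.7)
\bigskip

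\noindent \textbf{Proof proposal.}

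The automorphism group of the regular $n$-branching rooted tree of height $k$ is naturally the iterated wreath product $W_k := \underbrace{S_n \wr \cdots \wr S_n}_{k}$. The representation $\rho_f$ is surjective precisely when, for every $k$, the polynomial $f^{\circ k}$ is irreducible and $\Gal(f^{\circ k}/E) \cong W_k$. My plan is to proceed by induction on $k$ and use a local ramification argument to supply the inductive step in a way that is robust enough to hold simultaneously for all $k$.

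The inductive step has the following shape. One has $\Gal(f^{\circ(k+1)}/E) \hookrightarrow S_n \wr W_k$, surjecting onto $W_k$ by the inductive hypothesis. Its kernel injects into $S_n^{n^k}$, with the coordinate indexed by a root $r$ of $f^{\circ k}$ corresponding to $\Gal(E(f^{-1}(r))/E(r))$. Surjectivity at level $k+1$ therefore amounts to two assertions: that each $f(X)-r$ has Galois group $S_n$ over $E(r)$, and that the resulting extensions are mutually linearly disjoint over the splitting field of $f^{\circ k}$. By the standard reduction, linear disjointness can be detected via discriminants and parity data, so the problem becomes one of producing a ``new'' wildly ramified $S_n$-extension above each root at every level.

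My approach is to arrange all of this locally at a single prime. I would fix a rational prime $p > n$ and a prime $\mathfrak{p}$ of $E$ above $p$, and try to construct a polynomial $f \in E[X]$ whose reduction and critical-orbit behavior at $\mathfrak{p}$ are chosen so that the local inertia subgroup $I_{\mathfrak{p}} \subseteq \Gal(f^{\circ k}/E_{\mathfrak{p}})$ already equals a pro-$p$ Sylow subgroup of $W_k$ for \emph{every} $k$. A natural guess for the construction is a post-critically finite polynomial whose critical values are $\mathfrak{p}$-adically close to $0$, so that iterated preimages of the critical orbit furnish new totally wildly ramified $n$-extensions at each stage. The inertia, together with a suitably chosen auxiliary unramified place $\mathfrak{q}$ that supplies a transposition (forcing the quotient onto $S_n$ at the base), would then force $\Gal(f^{\circ k}/E) = W_k$. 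Finally, the Hilbertian property of $E$, combined with the fact that the desired local condition at $\mathfrak{p}$ cuts out a non-empty $\mathfrak{p}$-adic open set in the parameter space of monic degree $n$ polynomials, should allow one to descend from $E_{\mathfrak{p}}$ to $E$.

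The hard part will be Step 3: exhibiting a single $\mathfrak{p}$-adic polynomial whose local arboreal image is as large as possible at \emph{all} levels at once. Controlling the Galois group of each finite iterate individually is feasible via Eisenstein/Newton-polygon analysis, but propagating the wild ramification coherently through infinitely many iterations — so that the inductive mechanism does not ``run out of ramification'' after some finite stage — is the core technical obstruction. I expect this step to require a self-similar choice of $f$ (for example, a polynomial whose critical orbit enters and then stays in a neighborhood with prescribed ramification) together with a Krasner-type stability argument showing that small $\mathfrak{p}$-adic perturbations preserve the property at every level, so that a global $f \in E[X]$ with the desired local behavior can ultimately be produced by Hilbert irreducibility applied to a suitable cover of the parameter space.
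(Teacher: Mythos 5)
Your high-level framing is reasonable---reduce to level-by-level wreath product surjectivity, supply the quotient-$S_n$ surjections via a transposition plus a long cycle, and obtain everything from inertia at well-chosen primes, then stabilize via Hilbert irreducibility and a Krasner argument. That skeleton matches the paper: Lemma~\ref{Jordan's Criterion} abstracts exactly the ``transposition $+$ long cycle'' recognition of $S_n$ at each level, and Section~\ref{sec 4} does use Krasner's lemma and Hilbert irreducibility. But the mechanism you propose for the key step cannot work, for three concrete reasons.

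First, the single-prime wild-ramification plan is internally contradictory. You fix $p > n$, so $p \nmid n!$ and hence $p \nmid |W_k|$ for every $k$; the pro-$p$ Sylow of $\Aut(T_f)$ is therefore \emph{trivial}, and inertia at $\mathfrak{p}$ acts through its tame (pro-cyclic, prime-to-$p$) quotient. The paper uses precisely this fact for its prime $p_\infty > n$ (see Assumption~\eqref{A699} and the discussion preceding Lemma~\ref{ split infinity}) to tame the local picture; you cannot also ask it to supply wild ramification.

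Second, even with $p \leq n$, a single prime's decomposition group can never be open in $\Aut(T_f)$. The sign character at each level gives $\Aut(T_f) \twoheadrightarrow \prod_{k\geq 1}\Z/2$, which is not topologically finitely generated, whereas $I_\mathfrak{p}$ (tame $\rtimes$ wild) is topologically $2$-generated. So its closed image cannot have finite index, let alone contain a $\Gamma(N)$. This is precisely why the paper is forced to use infinitely many primes $\{p_k\}_{k>N}$, one per level, each contributing a transposition in $\Gal(E_k/E_{k-1})$ by the branching analysis of Lemma~\ref{local transposition}; these primes are produced by Lemma~\ref{new prime}, which extracts at each $k$ a new prime dividing the $k$-th critical-orbit value $f^{\circ k}(\tfrac{a}{n}A)$ to odd order.

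Third, the post-critically finite suggestion points in the opposite direction. A PCF polynomial has a finite critical orbit, so the discriminants of its iterates are supported on a \emph{fixed} finite set of primes; this is exactly the ``running out of ramification after a finite stage'' failure you correctly flag as the core obstruction, and it is a theorem (in the style of Jones) that PCF polynomials have arboreal image of \emph{infinite} index. The paper solves this by choosing $f$ so that the critical point $\tfrac{a}{n}A$ has an infinite, rapidly growing orbit, guaranteeing by a height/valuation argument (Assumptions~\eqref{A3}--\eqref{A7} and Equation~\eqref{denominator}) that each $f^{\circ k}(\tfrac{a}{n}A)$ acquires a genuinely new odd-order prime factor. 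You need the opposite of PCF: a critical orbit that is as arithmetically ``new'' as possible at every iterate.
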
 

In this note, we prove Odoni's conjecture when~$E$ is a number field. More generally, we prove Conjecture~\ref{odoni's Conjecture} for extensions of~$\Q$ that are unramified outside of finitely many primes of~$\Z.$ 

\begin{theorem}\label{small} If~$E/\Q$ is an algebraic extension that is unramified outside finitely many primes, then for each positive integer~$n$ there exists a positive integer~$a<n$ and infinitely many~$A\in \Q$ such that the polynomial~$$f_{a,A}(X) := X^{a}(X-A)^{n-a} + A$$ and all of its iterates are irreducible over~$E$ and the arboreal~$G_E$-representation associated to~$f_{a,A}$ is surjective.     
\end{theorem}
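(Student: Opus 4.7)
The polynomial $f_{a,A}(X) = X^a(X-A)^{n-a} + A$ is chosen so that at any prime $p$ with $v_p(A) = 1$, one has $f_{a,A}(X) \equiv X^n \pmod{p}$ while the constant term is $A$; hence $f_{a,A}$ is Eisenstein at $p$. The plan is to fix such a prime $p$, unramified in $E$ and coprime to $n$ (which exists since $E/\Q$ is unramified outside finitely many primes), and to exploit the fact that the Eisenstein property \emph{propagates} through the entire tree of iterates from this single local condition.

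I would first prove irreducibility and totally ramified splitting at $p$ by induction on $k$: for a root $\alpha$ of $f^{\circ(k-1)}_{a,A}$, the extended valuation satisfies $w(\alpha) = 1/n^{k-1} < 1 = w(A)$, and a direct check shows that $f_{a,A}(X) - \alpha \in E(\alpha)[X]$ is Eisenstein with respect to the uniformizer $\alpha$ of $E(\alpha)$: its constant term $A - \alpha$ has valuation $w(\alpha)$, and its middle coefficients are all multiples of $A$ and hence of higher valuation. Inductively, $f^{\circ k}_{a,A}$ is irreducible over $E$, the extension $E(\alpha_k)/E$ is totally ramified at $p$ of degree $n^k$, and tame inertia at $p$ provides an element of $\Gal(f^{\circ k}_{a,A}/E)$ acting as an $n^k$-cycle on the roots.

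To upgrade to $\Gal(f^{\circ k}_{a,A}/E) = \Aut(T_{f_{a,A}})$, I would argue inductively via the exact sequence
$$1 \to \prod_{\alpha} \Gal(f_{a,A}(X) - \alpha / E(\alpha)) \to \Gal(f^{\circ k}_{a,A}/E) \to \Gal(f^{\circ(k-1)}_{a,A}/E) \to 1,$$
the product being over roots $\alpha$ of $f^{\circ(k-1)}_{a,A}$. The $n$-cycle from inertia at $p$ sits inside each factor as a cyclic subgroup; to promote the image to the full $S_n$ over each $E(\alpha)$ one needs an additional transposition. A suitable choice of $a$ depending on $n$, together with a finite list of congruence conditions on $A$ at one or two auxiliary primes $q$, would supply Frobenius elements realizing such transpositions in $\Gal(f_{a,A}/E)$ at level one. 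Linear disjointness of the various $S_n$-extensions across roots $\alpha$ would then follow from the transitivity of the inductively-known $\Aut(T_{f_{a,A}})$-action at level $k-1$.

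The principal obstacle is precisely the one flagged in the introduction: a naive Hilbert-irreducibility argument would require avoiding a thin exceptional set at each level $k$, and a countable intersection of complements of thin sets can be empty. The resolution must show that the local conditions on $A$ are rigid enough to propagate through \emph{every} level of the tree from a finite amount of data. The Eisenstein reproduction above is the paradigmatic example, and the central technical work would be to extend this self-propagation to the transposition condition at $q$ --- perhaps by exploiting that the arithmetic of the extensions $E(\alpha_k)/E$ is controlled by ramification at $p$, so that their behavior at $q$ is already pinned down by the choice of $A$ alone. Granting this propagation, a single finite list of congruence conditions on $A$, solvable in infinitely many $A \in \Z$ by the Chinese Remainder Theorem combined with the Hilbertian property of $\Q$ (to dispose of the level-one $S_n$ condition), suffices to force $\rho_{f_{a,A}}$ to be surjective onto $\Aut(T_{f_{a,A}})$ at every level simultaneously, yielding Theorem~\ref{small}.
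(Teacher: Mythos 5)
Your opening moves match the paper's: the Eisenstein propagation at a prime $p_0$ with $v_{p_0}(A)=1$ is exactly Lemma~3.3, and the reduction to producing transpositions at each level via the kernel filtration is the content of Lemma~2.1. But the plan diverges, and fails, at the step you flag as ``the central technical work.'' Two concrete gaps:

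\emph{First}, a single fixed auxiliary prime $q$ cannot supply transpositions at every level~$k$. The ramification of $q$ in the tower $E_1\subset E_2\subset\cdots$ is bounded, so for large~$k$ the inertia group at $q$ acts trivially on $\Gal(E_k/E_{k-1})$; there is no ``self-propagation'' of the transposition condition in the way you hope. The paper instead proves (Lemma~3.6) that for each~$k$ there is a \emph{new} prime $p_k$, coprime to $nA^-A^+$ (hence unramified in~$E$), dividing the numerator of the critical value $f^{\circ k}(\tfrac{a}{n}A)$ to odd order. The infinitude of such primes is forced by a delicate analysis of the denominators $c_k^-$ in $\Z_2^\times$ using hypotheses \eqref{A3}--\eqref{A7} on~$A$ (and, for $n\equiv 7\bmod 8$, the structural choice $a=1$). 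This arithmetic, not self-propagation, is what escapes the thin-set problem: the congruence conditions on~$A$ are finite, but the primes realizing the transpositions are allowed to vary with~$k$.

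\emph{Second}, conjugating one transposition to lie above each root $\pi$ of $f^{\circ(k-1)}$ (via the $n^k$-cycle $\sigma_0$) gives only \emph{one} transposition in each factor of $\prod_\pi S_{X_\pi}$, and your appeal to ``linear disjointness'' via transitivity at level $k-1$ does not close the group. One needs $\Gal(F_\pi/F(\pi))$ to be all of $S_{X_\pi}$, which requires double transitivity. The paper supplies this by introducing a third local datum: a prime $p_\infty>n$ with $v_{p_\infty}(A)=-1$, whose tame inertia cyclically permutes an $(n-a)$-branching subtree (Lemma~3.5). Combined with the transposition and an $(n-1)$-cycle or long prime cycle, Jordan's theorem (Lemma~2.2) forces $\Gal(F_\pi/F(\pi))=S_{X_\pi}$. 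This is why $a$ is chosen so that $n-a$ is prime with $n-a>n/2$ (or $a=1$); your proposal leaves the constraint on $a$ unmotivated. Without this ingredient the argument only shows the image is a transitive subgroup of $\prod_\pi S_{X_\pi}$ containing scattered transpositions, which need not be the whole group.

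Finally, the Hilbert Irreducibility step in the paper (Section~4) is genuinely needed only in the edge cases $a=1$, where the method above yields $\Gamma(1)$ rather than $\Gamma(0)$; it is not a clean-up of a level-one condition layered on top of the main argument, but rather a careful perturbation argument that must preserve the local conditions at all the primes already in play.
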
 

Our choice to consider the polynomial families in Theorem~\ref{small} was inspired by examples of surjective arboreal Galois representations over~$\Q$ constructed by Robert Odoni and Nicole Looper. In~\cite{odoni1985prime}, Odoni shows that the arboreal~$G_{\Q}$-representation associated to~${X(X-1) +1}$ is regular and surjective.   In~\cite{looper2016dynamical}, Looper proves Conjecture~\ref{odoni's Conjecture} for polynomials over~$\Q$ of prime degree by analyzing the arboreal Galois representations associated to certain integer specializations of the trinomial family~$X^n - ntX^{n-1} + nt = X^{n-1}(X-nt) + nt$. 

In addition to our note, there have been a series of recent, independent works concerning Odoni's conjecture. Borys Kadets \cite{2018arXiv180209074K} has proved Conjecture~\ref{odoni's Conjecture} when~$n$ is even and greater than 19, and~$E = \Q.$ Robert Benedetto and Jamie Juul \cite{BJ2018} have proved Conjecture~\ref{odoni's Conjecture} when~$E$ a number field, and~$n$ is even or~$\Q(\sqrt{n},\sqrt{n-2}) \not\subseteq E$.

The organization of this paper is as follows. Section~\ref{sec 2} provides a criterion with which to check if an arboreal Galois representation contains a congruence subgroup~$\Gamma(N)$. This criterion is that the image of the arboreal Galois representation contains, up to conjugation, some set of preferred elements~$$\{\sigma_0 \} \cup \{ \sigma_{k} : k>N\}\cup \{\sigma_{\infty,N}\}$$ which topologically generate a subgroup containing~$\Gamma(N).$ In Section~\ref{sec 3}, we show that for various explicit choices of~$A$ and~$a$ there are prime integers~$$\{p_0\}\cup \{ p_k : k > 0 \} \cup \{p_{\infty}\}$$ such that the image of the inertia group~$I_{p_k} \leqslant  G_{\Q_{p_k}}$ under~$\rho_{f_{a,A}}$ contains an element conjugate to~$\sigma_k$ if~$k<\infty,$ and conjugate to either~$\sigma_{{\infty},1}$ or~$\sigma_{{\infty},0}$ if~$k = \infty$. By choosing~$A$ well, one can force~$p_k$ to lie outside any fixed, finite set of primes; hence if~$E/\Q$ is unramified outside finitely many primes, then there is a choice of~$a$ and~$A$ such that the image of~$G_{E}$ under~$\rho_{f_{a,A}}$ contains~$\Gamma(1)$. Given such a polynomial, its arboreal Galois representation is surjective if and only if its splitting field is an~$S_n$-extension. In Section~\ref{sec 4}, we prove there are infinitely many values of~$A$ and~$a$ for which the representation~$\rho_{f_{a,A}}: G_ E \rightarrow  \Aut(T_{f_{a,A}})$ is surjective by means of a Hilbert Irreducibility argument. 

\section{Recognizing Surjective Representations}\label{sec 2}

Fix a field~$F$ of {characteristic~$0$} and let~$f \in F[X]$ be a polynomial. For every non-negative integer~$N,$ let~$$T_{f,N} := \coprod_{k=0}^{N} \{ r \in \overline{F}: f^{\circ k}(r) = 0\} \subseteq T_f$$ denote the full subtree of~$T_f$ whose vertices have at most height~$N.$  The subtree~$T_{f,N}$ is stable under the action of~$\Aut(T_f).$ Let~$\Gamma(N) \leqslant \Aut(T_f)$ be the vertex-wise stabilizer of~$T_{f,N}$ in~$\Aut(T_f).$ In this section, we describe a condition under which the image of~$\rho_f$ contains~$\Gamma(N).$ Since~$\Gamma(0)$ equals~$\Aut(T_f),$ the case when~$N=0$ is of primary interest. 

To state our criterion, we introduce some terminology. For each non-negative integer~$k,$ we denote the splitting field of~$f^{\circ k}$ over~$F$ by~$F_{k}.$ If~$k$ is negative, we define~$F_k := F.$ By a \emph{branch} of the tree~$T_f,$ we mean a sequence of vertices~$(r_i)_{i=0}^{\infty}$ such that~$r_0 = 0$ and~$f(r_i) = r_{i-1}$ for~$i>0.$ The group~$G_F$ acts on the branches of~$T_f.$ If~$X$ is some set of branches and~$\sigma \in G_F,$ we say that~$\sigma$ acts \emph{transitively} on~$X$ if the closed, pro-cyclic subgroup~$\overline{\langle \sigma \rangle} \subset G_F$ stabilizes~$X$ and acts transitively in the usual sense.   
\vspace{10pt}

The following is a sufficient condition for the image of a regular aboreal Galois representation to contain~$\Gamma(N).$ 

\begin{lemma}\label{Jordan's Criterion} Let~$N$ be a non-negative integer,~$f\in F[X]$ be a monic polynomial of degree~$n$, and~${a<n}$ be a positive integer such that either~$a=1,$ or~$a <n/2$ and~$n-a$ is prime. Assume that all iterates of~$f$ are separable. Furthermore, assume that:
\begin{enumerate} 
\item\label{JC 1}  there is an element{~$\sigma_0 \in G_F$} which acts transitively on the branches of~$T_f,$  
\item\label{JC 2} there is an element{~$\sigma_{{\infty},N} \in G_F$} and a regular,~$(n-a)$-branching subtree~$T \subseteq T_f$ such that{~$\sigma_{{\infty},N}$} acts transitively on the branches of~$T,$ and
\item\label{JC 3} for every positive integer~$k> N,$ there is an element~$\sigma_k \in \Gal(F_k/F_{k-1})$ which acts on the roots of~$f^{\circ k}$ in~$F_k$ as a transposition,
\end{enumerate}
\noindent then all iterates of~$f$ are irreducible, and the image of the arboreal Galois representation associated to~$f$ contains~$\Gamma(N).$  
\end{lemma}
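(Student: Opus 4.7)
My plan is as follows. The irreducibility of every iterate of~$f$ is immediate from~(1): $\sigma_0$ acts transitively on the level-$k$ vertices of~$T_f$ (which are the roots of~$f^{\circ k}$) for every~$k$. For the containment $\rho_f(G_F) \supseteq \Gamma(N)$, I would induct on $k \geq N$ the statement that the image $G_k$ of $\rho_f$ in $\Aut(T_{f,k})$ contains $\Gamma(N)_k := \ker(\Aut(T_{f,k}) \to \Aut(T_{f,N}))$. The base case $k = N$ is vacuous; in the inductive step, a coset-lifting argument using surjectivity of $G_k \to G_{k-1}$ reduces the problem to showing that $K_k := \ker(G_k \to G_{k-1})$, viewed inside $\prod_v S_n^{(v)}$ (product over level-$(k-1)$ vertices~$v$, with $S_n^{(v)}$ the symmetric group on the children of~$v$), is the full product.

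Assuming $H_v := \pi_v(K_k) = S_n^{(v)}$ for every~$v$ (the crux, addressed below), the rest of the induction is a standard subdirect product argument: $K_k \cap S_n^{(v)}$ is normal in $H_v = S_n^{(v)}$ and contains a transposition, obtained by conjugating $\sigma_k$ by an element $g \in G_F$ that sends the common parent of the two vertices swapped by~$\sigma_k$ to~$v$ (using $G_F$-transitivity on level-$(k-1)$ vertices from~(1)). The classification of normal subgroups of~$S_n$ then forces $K_k \cap S_n^{(v)} = S_n^{(v)}$, and letting~$v$ vary yields $K_k = \prod_v S_n^{(v)}$.

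To prove $H_v = S_n^{(v)}$, I would pass to the a priori larger group $\tilde H_v := \Gal(f(X) - r_v \,/\, F(r_v))$, where $r_v$ is the root of~$f^{\circ(k-1)}$ labelling~$v$. Since $F_{k-1}/F(r_v)$ is Galois, standard Galois theory makes $H_v$ a \emph{normal} subgroup of~$\tilde H_v$. I would then show $\tilde H_v = S_n$ via Jordan's theorem: transitivity is the irreducibility of $f(X) - r_v$ over $F(r_v)$ (a consequence of the irreducibility of~$f^{\circ k}$), a transposition is a suitable conjugate of~$\sigma_k$, and primitivity is addressed below. Granted $\tilde H_v = S_n$, the normal subgroup~$H_v$ is transitive (it contains the $n$-cycle $\pi_v(\sigma_0^{n^{k-1}})$) and contains a transposition, so the classification of normal subgroups of~$S_n$ again yields $H_v = S_n$.

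The main obstacle is primitivity of~$\tilde H_v$, and this is where the hypothesis on~$a$ enters. I would take~$v$ on level $k-1$ of~$T$, which suffices by $G_F$-transitivity on level-$(k-1)$ vertices. The element $\sigma_{\infty,N}^{(n-a)^{k-1}}$ fixes~$v$ and, on the $n - a$ children of~$v$ lying in~$T$, acts as an $(n-a)$-cycle. Raising to the further power equal to the order of its induced permutation on the $a$ remaining children of~$v$ trivializes the non-$T$-part while preserving an $(n-a)$-cycle on the $T$-part: the key numerical input is that $n - a$ is prime and $n - a > a$, so that $n-a$ does not divide~$a!$. The resulting image in~$\tilde H_v$ has cycle type one $(n-a)$-cycle plus $a$ fixed points. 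Primitivity follows: when $a = 1$, transitivity together with an $(n-1)$-cycle fixing a unique point forces 2-transitivity; when $a < n/2$ with $n-a$ prime, a short block-size argument using the bound $n < 2(n-a)$ excludes every non-trivial system of imprimitivity.
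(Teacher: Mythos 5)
Your proposal is correct and follows essentially the same route as the paper: in both arguments the crux is showing the ``ambient'' group $\tilde H_v = \Gal(F_v/F(r_v))$ is the full symmetric group, by combining the $n$-cycle supplied by a power of $\sigma_0$, a transposition supplied by a $G_F$-conjugate of $\sigma_k$, and a long cycle supplied by a power of $\sigma_{\infty,N}$ (the paper packages the primitivity step as Lemma~\ref{S_n}, while you re-derive it); one then descends to the level-$k$ subgroup via normality. One small slip: the power of $\sigma_{\infty,N}$ that fixes a vertex $v$ at height $k-1$ in the $(n-a)$-branching subtree $T$ (whose lowest vertex sits at height $N$) should be $\sigma_{\infty,N}^{(n-a)^{k-N-1}}$, not $\sigma_{\infty,N}^{(n-a)^{k-1}}$; with your exponent the element would already act trivially on the children of $v$ whenever $N\geq 1$.
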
 

\begin{proof} Since all iterates of~$f$ are separable, Hypothesis \ref{JC 1} implies that all iterates of~$f$ are irreducible. We show that~$\Gamma(N)$ is contained in the image of~$\rho_f.$  

For all integers~$k>N,$ the subgroup~$\Gamma(k) \leqslant \Gamma(N)$ is finite index, and~$\Gamma(N)$ is isomorphic to the inverse limit~$\varprojlim_{k>N} \Gamma(N)/\Gamma(k).$ We regard~$\Gamma(N)$ as a topological group with respect to the topology induced by the system of neighborhoods~$\{\Gamma(k)\}_{k>N}$. The map~$\rho_f:G_F \rightarrow \Aut(T_f)$ is continuous in this topology. Since~$G_F$ is compact, the image,~$\rho_f(G_F),$ is closed.

To show that the closed subgroup~$\rho_f(G_F)$ contains~$\Gamma(N),$ it suffices to show that for all~{$k$ greater than~$N$}  \begin{equation}\label{eq 2.1} (\rho_f(G_F) \cap \Gamma(k-1))/ (\rho_f(G_F) \cap \Gamma(k)) =  \Gamma(k-1)/ \Gamma(k). \end{equation} Fix an integer~$k > N.$ Concretely,~$\Gamma(k-1)/\Gamma(k)$ is the group of permutations~$\sigma$ of the roots of~$f^{\circ k}$ which satisfy the relation~$f(\sigma(r_k)) = f(r_k).$ For each root~$\pi$ of~$f^{\circ(k-1)},$ let~$X_\pi$ denote the set of roots of~$f(X) - \pi$ in~$\overline{F}$. The group~$\Gamma(k-1)/\Gamma(k)$ stabilizes~$X_{\pi},$ and there is an isomorphism~\begin{equation}\label{eq 2.2}  \Gamma(k-1)/\Gamma(k) \cong \bigoplus_{\substack{ \pi \in \overline{F}  \\ f^{\circ(k-1)}(\pi) =0}}  S_{X_\pi} \end{equation} given by the direct sum of the restriction maps. Note that~$\Gal(F_{k}/F_{k-1})$  is the subquotient of~$G_{F}$ which is mapped isomorphically to~${(\rho_f(G_F) \cap \Gamma(k-1))/ (\rho_f(G_F) \cap \Gamma(k))}$ via the map induced by~$\rho_f.$

To show Equation~\eqref{eq 2.1} holds (and therefore prove the lemma), it suffices by Equation~\eqref{eq 2.2}  to show that: 

\begin{enumerate} \item[$(\star)$] If~$(r\text{ }r')$ is a transposition in the symmetric group on the roots~$f^{\circ k}$ and~$f(r) = f(r'),$ then~$(r\text{ }r')$ is realized by an element of the Galois group~$\Gal(F_{k}/F_{k-1}).$  \end{enumerate} We will say a transposition~$(r\text{ }r')$ on the set of roots of~$f^{\circ k}$ lies \emph{above} a root~$\pi$ of~$f^{\circ (k-1)}$ if~$$f(r) = f(r') = \pi.$$  We conclude the proof by demonstrating that~$(\star)$ holds.

First, we show that~$\Gal(F_{k}/F_{k-1})$ contains \emph{at least one} transposition above each root of~$f^{\circ (k-1)}.$ Fix a root~$\pi$ of~$f^{\circ (k-1)}.$ By Assumption~\ref{JC 3}, the automorphism~$\sigma_k \in \Gal(F_{k}/F_{k-1})$ acts on roots of~$f^{\circ k}$ as a transposition. Since~$\sigma_k$ is an element of~$\Gal(F_{k}/F_{k-1}),$ it necessarily lies above a root~$\pi'$ of~$f^{\circ (k-1)}.$ By Assumption~\ref{JC 1}, there is some~$\tau \in \overline{\langle{\sigma_0} \rangle}$ such that~$\tau(\pi') = \pi.$ The conjugate~$\sigma_k^{\tau}$ acts on the roots of~$f^{\circ k}$ as a transposition above~$\pi.$

To conclude the proof, we show that~$\Gal(F_{k}/F_{k-1})$ contains \emph{every} transposition above~$\pi.$ Observe that elements of~$\Gal(F_{k}/F_{k-1})$ which are~$\Gal(F_{k}/F_{k-1})$-conjugate to a transposition above~$\pi$ are also transpositions and lie above~$\pi$. We know~$\Gal(F_{k}/F_{k-1})$ contains some transposition above~$\pi$. To show~$\Gal(F_{k}/F_{k-1})$ contains all transpositions above~$\pi$, it suffices to show~$G_{F(\pi)}$ acts doubly transitively on~$X_{\pi}.$ 

Let~$F_{\pi}$ be the splitting field of~$f(X) - \pi$ over~$F(\pi).$ We want to show that~$G_{F(\pi)}$ acts doubly transitively on~$X_{\pi},$ we will show~$\Gal(F_{\pi}/F(\pi))$ is isomorphic to the symmetric group~$S_{X_\pi}.$ We use the following criterion for recognizing the symmetric group:

\begin{lemma}[pg. 98 \cite{gallagher1973large},  Lemma 4.4.3 \cite{serre2016topics}] \label{S_n} Let~$G$ be a transitive subgroup of~$S_n.$ Assume~$G$ contains a transposition. If~$G$ either contains 
\begin{enumerate}[label=(\roman*)]
\item\label{n-1 cycle Sn lemma} an~$(n-1)$-cycle,   or
\item\label{p cycle Sn lemma} a~$p$-cycle for some prime~$p >n/2,$ 
\end{enumerate}
then~$G = S_n$. 
\end{lemma} 

\noindent We show these conditions hold for~$\Gal(F_{\pi}/F(\pi)) \leqslant S_{X_\pi}.$ First, by Assumption~\ref{JC 1}, the automorphism~$\sigma_0$ acts on the roots of~$f^{\circ k}$ as an~$n^k$-cycle. It follows~$ \sigma_0^{n^{k-1}}$ is an element of~$G_{F(\pi)}$ which acts on~$X_{\pi}$ as an~$n$-cycle. Consequently,~$\Gal(F_{\pi}/F(\pi))$ acts transitively on~$X_{\pi}.$ 

Next, consider the element~$\sigma := \sigma_{\infty,N}^{(n-a)^{k-N-1}}.$ If~$\pi_2$ is a root of~$f^{\circ k-1}$ contained in~$T,$ then~$\sigma$ fixes~$\pi_1$ and cyclically permutes the~$(n-a)$-vertices of~$T$ which lie above~$\pi_1$. It follows that the image of~$\sigma$ in~$\Gal(F_{\pi_1}/F(\pi_1))$ is either a~$(n-1)$-cycle, or has an order divisible by a prime~{$p := n-a >n/2.$} Taking a further power of~$\sigma$ if necessary, we deduce that there is a root~$\pi_1$ of~$f^{\circ k}$ such that the image of the permutation representation of~$\Gal(F_{\pi_1}/F(\pi_1))$ on~$X_{\pi_1}$ contains either an {$(n-1)$-cycle} or a~$p$-cycle for some prime~$p>n/2.$ By Hypothesis \ref{JC 1}, there is some {element~$\tau \in \overline{\langle{\sigma_0} \rangle}$} which maps~$\pi_1$ to~$\pi.$ Under such an element~$\tau,$ the  set~$X_{\pi_1}$ is mapped to~$X_{\pi},$ and the actions of~$\Gal(F_{\pi'}/F(\pi'))$ and~$\Gal(F_{\pi}/F(\pi))$ are intertwined. In particular, the cycle types occurring in~$\Gal(F_{\pi_1}/F(\pi_1))$  are the same~$\Gal(F_{\pi}/F(\pi)).$ By Lemma~\ref{S_n}, we conclude~$\Gal(F_{\pi}/F(\pi))\cong S_{X_\pi}.$ \end{proof}  

\begin{remark} Hypothesis~\ref{JC 1} of Lemma~\ref{Jordan's Criterion} can be replaced by the weaker assumption that~$T_f$ is a regular,~$n$-branching tree and~$G_F$ acts transitively on the branches of~$T_f,$ i.e. that~$f^{\circ k}$ is irreducible for all~$k.$  We have chosen to state Lemma~\ref{Jordan's Criterion} in this form, as it better indicates our strategy for the proof of the main theorem of Section~\ref{sec 3}.  
\end{remark} 

\section{Almost Surjective Representations}\label{sec 3} 

Fix an integer~$n \geq 2$ and a field~$E \subset{\overline{\Q}}$ that is ramified outside of finitely many primes in~$\Z$.  In this section, we give explicit examples of polynomials of degree~$n$ whose arboreal~$G_{E}$-representation contains~$\Gamma(1).$ In fact, many of our examples have surjective arboreal Galois representation. 

Given a non-zero rational number~$\alpha,$ define~$\alpha^+ \in \Z_+$ and~$\alpha^- \in \Z$ to be the unique positive integer and integer, respectively, such that~$(\alpha^+, \alpha^-) = 1$ and~${\alpha = \frac{\alpha^+}{\alpha^-}.}$ Our main theorem in this section is: 

\begin{theorem}\label{long} Let~$E/\Q$ be an extension which is unramified outside finitely many primes of~$\Z.$ Choose~$a<n$ to satisfy:
\begin{enumerate} [label=(\text{a.\arabic*}),ref=a.\arabic*]
\item\label{a1}  if~$n \leq 6,$ then~$a = 1,$
\item\label{a2}  if~$n\equiv 7  \mod 8,$ then~$a = 1,$
\item\label{a3} otherwise,~$n-a$ is a prime and~$a < n/2.$  
\end{enumerate}

Assume~$A \in \Q$ satisfies: 
\begin{enumerate} [label=(\text{A.\arabic*}),ref=A.\arabic*]
 	\item\label{A1}  if~$p$ is a prime which ramifies in~$E,$ then~$p$-adic valuation~$v_{p}(A) > 0,~$
	\item\label{A2}   there is a prime~$p_0$ which is unramified in~$E$ and prime to~$n$ such that~$v_{p_0}(A) =1,$

\item\label{A3}~$A >2^{\frac{1}{n-1}} \left(\frac{a}{n}\right)^{-\frac{a}{n-1}}\left| \frac{a}{n} - 1\right|^{-\frac{n-a}{n-1}} > 1,$
\item\label{A4}~$v_2(A) \geq \frac{3}{n-1} + \frac{n}{n-1}v_2(n),$ 
\item\label{A5}~$(A^+,n) =2^{v_2(n)},$ 
\item\label{A6}~$(A^-,a(a-n)) = 1,$ 
\item\label{A699} there is a prime~$p_\infty >n$ which is unramified in~$E$ such that~$v_{p_\infty}(A) =-1,$ and 
\item\label{A7}    if~$n$ is even, then~${A^- \not\equiv \pm1 \mod 8,}$
\end{enumerate}
\noindent then the polynomial~$$f(X) := X^{a}(X-A)^{n-a} + A$$ and all of its iterates are irreducible over~$E$ and the image of the arboreal~$G_E$-representation associated to~$f:$
\begin{enumerate}
\item contains~$\Gamma(1)$ if~$a = 1$ and~$n>2,$  (i.e.~$n$ satisfies~$2 < n \leq 6$ or n~$\equiv -7 \mod 8$),   and 
\item equals~$\Aut(T_f),$ otherwise.  
\end{enumerate} 
\end{theorem}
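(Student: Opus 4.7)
The plan is to verify the three hypotheses of Lemma~\ref{Jordan's Criterion} with $N=1$ by producing $\sigma_0$, $\sigma_{\infty,1}$, and $\{\sigma_k\}_{k>1}$ as images under $\rho_f$ of inertia generators at carefully chosen rational primes that lie outside the ramification of $E/\Q$. Once $\sigma_0$ has been constructed, irreducibility of every iterate of $f$ over $E$ is automatic (separability holds since $\operatorname{char}E=0$), so the argument reduces to three local Newton-polygon analyses and a final upgrade from $\Gamma(1)$ to $\Aut(T_f)$ when $a>1$.

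For $\sigma_0$, I would use the prime $p_0$ of hypothesis~\ref{A2}, at which $v_{p_0}(A)=1$ and $\gcd(p_0,n)=1$. Since $f(X)-A=X^a(X-A)^{n-a}$, a Newton polygon induction on $k$ shows that every root of $f^{\circ k}$ has $p_0$-adic valuation $1/n^k$; because $p_0$ is coprime to $n$, tame ramification theory gives that $f^{\circ k}$ is irreducible over $\Q_{p_0}$ and cuts out a totally ramified extension of degree $n^k$, whose inertia generator acts on the roots as a single $n^k$-cycle compatibly across levels. For $\sigma_{\infty,1}$, I would use the prime $p_\infty$ of hypothesis~\ref{A699}, where $v_{p_\infty}(A)=-1$ and $p_\infty>n$. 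The substitution $X=A(1+Y)$ in the equation $f(X)=0$ yields $Y^{n-a}(1+Y)^a=-A^{1-n}$, whose nonzero Newton slope $(n-1)/(n-a)$ is coprime to $p_\infty$; this isolates $n-a$ roots of $f$ near the fixed point $A$ (recall $f(A)=A$) that are cyclically permuted by tame inertia. Iterating the substitution level by level, using that pre-images of a root near $A$ are themselves clustered near $A$, produces a regular $(n-a)$-branching subtree $T\subseteq T_f$ on whose branches the inertia generator acts transitively.

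For each $k>1$, I would exploit the simple critical point $X=aA/n$ of $f$, whose critical value is $A+C$ with $C=A^n a^a(a-n)^{n-a}/n^n$. Hypothesis~\ref{A3} is exactly the archimedean inequality $|C|>2|A|$, which forces the forward orbit $\{f^{\circ j}(A+C)\}_{j\geq 0}$ to escape to infinity, so that the integers $f^{\circ(k-1)}(A+C)$ acquire infinitely many new prime factors. I would choose $p_k$ to divide $f^{\circ(k-1)}(A+C)$ but no earlier term of this orbit, to be coprime to $n$ and to $A$, and to be unramified in $E/\Q$. These conditions force $p_k$ to be unramified in the splitting field $F_{k-1}/\Q$, since the primes ramifying there will be shown (by induction on levels) to lie in the finite set of divisors of $n$, $A$, and prior terms $f^{\circ j}(A+C)$ with $j<k-1$. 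Hensel's lemma then produces a unique $\pi\in\Q_{p_k}$ reducing to $A+C$ with $f^{\circ(k-1)}(\pi)=0$, and $f(X)-\pi$ has a simple double root at $aA/n$ modulo $p_k$; inertia at $p_k$ therefore acts on the roots of $f^{\circ k}$ as a single transposition in $\Gal(F_k/F_{k-1})$, giving $\sigma_k$. Lemma~\ref{Jordan's Criterion} then yields $\Gamma(1)\leqslant\rho_f(G_E)$. To pass to $\rho_f(G_E)=\Aut(T_f)$ when $a>1$, I would observe that the level-$1$ restrictions of $\sigma_0$ and a suitable power of $\sigma_{\infty,1}$ supply an $n$-cycle and a prime $p$-cycle with $p=n-a>n/2$, so by Lemma~\ref{S_n} it suffices to exhibit a single transposition in $\Gal(F_1/E)$. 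The $2$-adic and coprimality hypotheses~\ref{A4}, \ref{A5}, \ref{A6}, and~\ref{A7} are tailored precisely to produce such a transposition from inertia at $2$ or at a prime divisor of $A^-$, without that prime being absorbed into the ramification of $E/\Q$.

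The principal obstacle I anticipate is the ramification bookkeeping underlying the choice of $p_k$: one must verify inductively that the only primes ramifying in $F_{k-1}/\Q$ are divisors of $n$, $A$, or earlier terms of the critical orbit, so that a fresh prime factor of $f^{\circ(k-1)}(A+C)$ is automatically unramified in $F_{k-1}/\Q$ and its inertia contribution lies genuinely in $\Gal(F_k/F_{k-1})$ rather than mixing with lower levels. Closely related is verifying that $v_{p_k}(f^{\circ(k-1)}(A+C))=1$ for a suitable choice of $p_k$, so that Hensel lifts a single $\pi$ and the ramification is exactly a transposition rather than a longer cycle. Finally, the upgrade to $S_n$ at level~$1$ when $n$ is even will require a delicate $2$-adic case analysis calibrated to hypothesis~\ref{A7}.
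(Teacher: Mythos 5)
Your verification of Hypotheses (\ref{JC 1}) and (\ref{JC 2}) of Lemma~\ref{Jordan's Criterion} via the primes $p_0$ and $p_\infty$ mirrors the paper's Lemmas~\ref{eisenstein} and~\ref{ split infinity} closely, and your observation that irreducibility is automatic once $\sigma_0$ is in hand is correct. However, there is a genuine gap in the step that produces the transpositions $\sigma_k$, i.e., in verifying Hypothesis~(\ref{JC 3}). You assert that because Hypothesis~(\ref{A3}) forces the critical orbit $f^{\circ(k-1)}(A+C)$ to escape to infinity, these values ``acquire infinitely many new prime factors'' from which a suitable $p_k$ may be chosen with $v_{p_k}=1$. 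Archimedean escape to infinity does not, by itself, furnish any such fresh prime factor, let alone one appearing to exactly the first power; a rational orbit can grow while its numerators remain supported on a fixed set of primes or be perfect squares. This existence question is precisely what Hypotheses~(\ref{A4})--(\ref{A7}) are for, and they are used at \emph{every} level $k\geq 1$, not merely to engineer a level-$1$ transposition for the upgrade when $a>1$ as you suggest. The paper's Lemma~\ref{new prime} sets $c_k:=f^{\circ k}(\tfrac{a}{n}A)/A$, proves by induction that $c_k^+$ is coprime to $nA^-A^+$, and then shows $c_k^+$ cannot be a perfect square: Hypothesis~(\ref{A4}) forces $c_k\equiv 1\bmod 8$ (a $2$-adic square), while the explicit formula~\eqref{denominator} for $c_k^-$, combined with~(\ref{A5}), (\ref{A6}), (\ref{A7}) and the parity constraints~(\ref{a2}), (\ref{a3}), forces $c_k^-\not\equiv 1\bmod 8$ (a $2$-adic non-square). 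Hence $c_k^+$ has a prime factor of \emph{odd} (not necessarily $1$) valuation, outside the prescribed bad set. This is the irreplaceable arithmetic input your sketch leaves unsupplied.

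Secondly, the inductive ``ramification bookkeeping'' you propose for placing $p_k$ outside the ramification of $F_{k-1}/\Q$ is both not what the paper does and unnecessary. In Lemma~\ref{local transposition} the paper shows directly, via Hensel and a Newton polygon for the quadratic factor at the critical point, that $I_{p_k}$ acts on the roots of $f^{\circ k}$ as a transposition interchanging two roots $r,r'$ with $f(r)=f(r')$; such an element automatically fixes every root of $f^{\circ(k-1)}$ and therefore lies in $\Gal(F_k/F_{k-1})$ without any control on how $p_k$ behaves in $F_{k-1}$. Finally, the odd-valuation condition (rather than your valuation-$1$ condition) is exactly what makes the quadratic Newton slope $\tfrac{v_{p_k}(f^{\circ k}(\frac{a}{n}A))}{2}$ non-integral in the proof of Claim~\ref{loc trans}, so requiring $v_{p_k}=1$ is both over-strong and harder to arrange.
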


\vspace{6pt} 

\noindent It is clear that there infinitely many values of~$A$ satisfying Hypotheses \eqref{A1} - \eqref{A7}. The fact that there is a value of~$a$ satisfying Hypotheses \eqref{a1} - \eqref{a3} is a consequence of Bertrand's postulate. 

The remainder of this section constitutes the proof of Theorem~\ref{long}. Fix elements~$a<n$ and~$A \in \Q$ which satisfy the hypotheses of this theorem, and let~${f(X) = X^a(X-A)^{n-a} + A.}$ Let~${N=1}$ if~$a=1$ and~$n>2;$ otherwise, let~$N=0$. As in Section~\ref{sec 2}, for each non-negative integer~$k,$ we denote the extension of~$E$ generated by all roots of~$f^{\circ k}$ by~$E_k \subseteq \Qbar.$ Finally, for each prime~$p \in \Z,$ fix for once and for all an embedding~$i_p:\overline{\Q} \hookrightarrow \overline{\Q}_p.$ The map~$i_p$ induces an inclusion on Galois groups~$G_{\Q_p} \hookrightarrow G_{\Q}.$ Throughout the remainder of this note, we will regard~$\overline{\Q}$ as a subfield of~$\overline{\Q}_p,$ and~$G_{\Q_p}$ as a subgroup of~$G_{\Q}$ via these maps. We denote the maximal unramified extension of~$\Q_p$ by~$\Q_p^{un}.$  

We will use Lemma~\ref{Jordan's Criterion} to show that the image of~$G_E$ under~${\rho_f: G_{\Q} \rightarrow \Aut(T_f)}$ contains~$\Gamma(N).$ To do so, we will show that~$G_E$ contains a set of elements~$\{\sigma_k: k\in \mathbf{N} \cup\{\infty\}\}$ that satisfy the hypotheses of Lemma~\ref{Jordan's Criterion}, where~$\sigma_{\infty}$ denotes~$\sigma_{\infty,N},$ an element satisfying Hypothesis~\ref{JC 2}. As described in the introduction, our strategy will be to find a set of prime integers~${\{p_k : k \in \mathbf{N} \cup \{\infty\}\}}$ that are unramified in~$E$ and have the property that the inertia subgroup~$I_{p_k} \leqslant G_{\Q_{p_k}} \leqslant G_E$ contains an element~$\sigma_k$ satisfying the relevant hypothesis of Lemma~\ref{Jordan's Criterion}. The primes~$p_0$ and~$p_\infty$ are those primes described in Theorem~\ref{long} that satisfy hypotheses~$\eqref{A2}$ and~$\eqref{A699}$, respectively. The local behavior of~$\rho_f$ at these primes mimic the local behavior at~$0$ and~$\infty$ in the arboreal Galois representation attached to~$f(X,t) = X^a(X-t)^{n-a} + t$ over~$\C(t).$ In Lemmas~\ref{eisenstein} and~\ref{ split infinity}, we show that when~$k$ is~$0 \text{ or } \infty,$ the~$I_{p_k}$-action on~$T_f$ factors through its tame quotient, and a lift~$\sigma_k$ of any generator of tame inertia satisfies the relevant hypothesis of Lemma~\ref{Jordan's Criterion}. From Lemma~\ref{eisenstein}, we will also deduce all iterates of~$f$ are separable. The primes~$p_k$ for~$k$ a positive integer are found in Lemma~\ref{new prime}. Every iterate of the polynomial~$f$ has a critical point at~$\frac{a}{n}A.$ Therefore,~$f^{\circ k}(\frac{a}{n}A)$ divides the discriminant of~$f^{\circ k}.$ Furthermore,~$\frac{a}{n}A$ is a simple critical point of~$f.$  In Lemma~\ref{new prime}, we find a prime~$p_k$ that is prime to the numerator of~$A$ (and hence by Assumption \eqref{A1} is unramified in~$E$) and divides the numerator of~$f^{\circ k}(\frac{a}{n}A)$ to odd order. Assumptions~$\eqref{A3} - \eqref{A6}$ and~$\eqref{A7}$ are made to guarantee that such a prime divisor occurs. In Lemma~\ref{local transposition}, we show the ring of integers of~$E_k$ is simply branched over~$\Spec(\Z)$ at~$p_k.$ At such primes~$p_k,$ the elements of the inertia group~$I_{p_k}$ that act non-trivially on the roots~$f^{\circ k}$ act as a transposition~$\sigma_k$.   

\vspace{10 pt}

We begin by verifying that all iterates of~$f$ are separable and that Hypothesis \ref{JC 1} of Lemma~\ref{Jordan's Criterion} holds for~$f.$ Let~$p_0$ be a prime that satisfies Assumption \eqref{A2}. We wish to show that \emph{all iterates of~$f$ are separable, and that there is an element~$\sigma_0 \in G_E,$ which acts transitively on the branches of~$T_f.$} We will show that all iterates of~$f$ are separable over~$\Q_{p_0},$ and that there is an element~$\sigma_0 \in I_{p_0}$ which acts transitively on the branches. This is immediate consequence of the following lemma:

\begin{lemma}\label{eisenstein} Let~$a \in \Z_+$ and~$A\in \Q$ satisfy the assumptions of Theorem~$\ref{long}.$ Let~$p_0$ be a prime that witnesses Assumption \eqref{A2}. For all positive integers~$i,$ the polynomial~$f^{\circ i}$ is irreducible over~$\Q_{p_0}^{un}$ and splits over a cyclic extension.     \end{lemma}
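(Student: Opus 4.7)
The strategy is to observe that the hypothesis $v_{p_0}(A)=1$ makes $f$ Eisenstein at $p_0$, and then propagate Eisenstein behaviour up the tower of iterates. Expanding
$$f(X) = X^n + \sum_{k=1}^{n-a} \binom{n-a}{k}(-A)^k X^{n-k} + A,$$
the leading coefficient is $1$, the constant term $A$ has $v_{p_0}=1$, and every middle coefficient is divisible by $A$, hence by $p_0$. So $f$ is irreducible over $\Q_{p_0}$, and any root $\alpha_1$ generates a totally ramified extension $K_1/\Q_{p_0}$ of degree $n$, with $\alpha_1$ a uniformizer.

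I would then prove by induction on $i$ the following claim: for any compatible sequence of roots $\alpha_0 = 0, \alpha_1, \alpha_2, \ldots$ with $f(\alpha_i) = \alpha_{i-1}$, the extension $K_i := \Q_{p_0}(\alpha_i)$ is totally ramified of degree $n^i$ over $\Q_{p_0}$, with $\alpha_i$ as a uniformizer. For the inductive step, check that $f(X) - \alpha_{i-1}$ is Eisenstein over $K_{i-1}$ with respect to the uniformizer $\alpha_{i-1}$: since $v_{K_{i-1}}(A) = n^{i-1} \geq n$ while $v_{K_{i-1}}(\alpha_{i-1}) = 1$, the constant term $A - \alpha_{i-1}$ has valuation exactly $1$; the middle coefficients remain divisible by $A$, so have valuation $\geq n^{i-1}$; and the leading coefficient is $1$. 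Thus $f(X) - \alpha_{i-1}$ is irreducible of degree $n$ over $K_{i-1}$, giving $K_i = K_{i-1}(\alpha_i)$ totally ramified of degree $n$ over $K_{i-1}$ with $\alpha_i$ a uniformizer. This forces $f^{\circ i}$ to be irreducible over $\Q_{p_0}$, and since $K_i/\Q_{p_0}$ is totally ramified it is linearly disjoint from $\Q_{p_0}^{un}/\Q_{p_0}$, so $f^{\circ i}$ remains irreducible over $\Q_{p_0}^{un}$.

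For the cyclic splitting field I would invoke tame ramification. By hypothesis \eqref{A2}, $p_0$ is coprime to $n$, hence to $n^i$, so $K_i/\Q_{p_0}$ is tamely ramified; therefore $K_i \cdot \Q_{p_0}^{un}/\Q_{p_0}^{un}$ is totally tamely ramified of degree $n^i$. Since $\Q_{p_0}^{un}$ contains all roots of unity of order prime to $p_0$ (in particular all $n^i$-th roots of unity), Kummer theory identifies the unique totally tamely ramified extension of $\Q_{p_0}^{un}$ of degree $n^i$ as $\Q_{p_0}^{un}(\pi^{1/n^i})$ for any uniformizer $\pi$ of $\Q_{p_0}^{un}$, which is cyclic. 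In particular, $K_i \cdot \Q_{p_0}^{un}$ is Galois cyclic over $\Q_{p_0}^{un}$, so it contains every conjugate of $\alpha_i$ and is therefore the splitting field of $f^{\circ i}$ over $\Q_{p_0}^{un}$.

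The only real content of the argument is the inductive Eisenstein verification, which crucially uses $v_{p_0}(A)=1$ exactly (not merely $\geq 1$) in order to pin down the ramification index of $K_i$ at each stage; everything else is standard local field theory once tameness is in hand. Separability of $f^{\circ i}$, which the paper mentions extracting from this lemma, follows because the minimal polynomial of the uniformizer $\alpha_i$ over $\Q_{p_0}$ has degree $n^i = \deg f^{\circ i}$ and is separable in characteristic $0$.
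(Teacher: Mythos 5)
Your proof is correct, and it is somewhat more laborious than the paper's, which finds a direct route: the paper observes that $A$ is a fixed point of $f$ (indeed $f(A) = A^a(A-A)^{n-a} + A = A$) and that $f(0) = A$, so $f^{\circ i}(0) = A$ for every $i \geq 1$; combined with $f \equiv X^n \pmod{p_0}$, which forces $f^{\circ i} \equiv X^{n^i} \pmod{p_0}$, this exhibits $f^{\circ i}$ as an Eisenstein polynomial over $\Z_{p_0}$ in one shot. You instead climb the tower one level at a time, verifying that $f(X)-\alpha_{i-1}$ is Eisenstein over $K_{i-1}$ with respect to the uniformizer $\alpha_{i-1}$; this is a standard ``Eisenstein propagates up the tower'' induction and is equally valid, and as a bonus it pins down the valuation of each root $\alpha_i$ explicitly, whereas the paper only sees the total ramification degree. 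The tameness and cyclicity conclusion is essentially identical in both proofs: $p_0 \nmid n$ gives tame ramification, and the splitting field over $\Q_{p_0}^{\mathrm{un}}$ of a totally (tamely) ramified extension of degree $n^i$ is the unique cyclic tame extension of that degree. In short: same key lemma (Eisenstein plus tameness), different packaging, with the paper's fixed-point observation buying brevity and yours buying a slightly more detailed picture of the ramification filtration.
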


\begin{proof} We show that~$f^{\circ i}$ is an Eisenstein polynomial over~$\Z_{p_0}$. By Assumption \eqref{A2}, the polynomial~$f$ has~{$p_0$-integral} coefficients, and satisfies the congruence~${f \equiv X^{n} \mod p_0.}$ Therefore,~$f^{\circ k} \in \Z_{p_0}[X]$ and satisfies the congruence~${f^{\circ k}(X) \equiv X^{n^k}  \mod p_0.}$ Noting that~$f(0) = A$ and that~$A$ is a fixed point of~$f,$ we conclude that~$f^k(0) = A,$ which is a uniformizer in~$\Z_{p_0}.$ Therefore,~${f^{\circ k} \in \Z_{p_0}[X]}$ is an Eisenstein polynomial. 

Since the degree~$\deg(f^{\circ i}) = n^{i}$ is prime to~$p_0,$ an Eisenstein polynomial of this degree is irreducible over~$\Q_{p_0}^{un}$ and splits over the cyclic, tame extension of~$\Q_{p_0}^{un}$ of ramification degree~$n^i$.
\end{proof}

Our next task is to verify that Hypothesis~\ref{JC 2} of Lemma~\ref{Jordan's Criterion} holds for~$f$. Note that the conditions~{\eqref{a1}-\eqref{a3}} of Theorem \ref{long} are those on~$a$ that appear in the statement of Lemma~\ref{long}. Therefore, we must show that \emph{there is a regular~${(n-a)}$-branching subtree~$T \subseteq T_f$ whose lowest vertex has height~$N,$ and an element~${\sigma_{\infty} \in G_E}$ which preserves~$T$ and acts transitively on the branches of T.} This claim is vacuously true if~$n = 2;$ in this case one can take~$T$ to be any branch of~$T_f$ and~$\sigma_{\infty}$ to be the identity. We may therefore restrict our attention to the case that~$n >2.$

Let~$p_{\infty}$ be a prime that witnesses Assumption~\eqref{A699} of Theorem~\ref{long}. Since~$p_{\infty} > n,$ the {pro-$p_{\infty}$-Sylow} of~$\Aut(T_f)$ is trivial and the action of~$I_{p_{\infty}}$ on~$T_f$ factors through its pro-cyclic, tame quotient. By the unramifiedness condition in~\eqref{A699}, we have~$I_{p_{\infty}} \leqslant G_E.$ To verify the Hypothesis~\ref{JC 2}, it thus suffices to show there is an~$I_{p_{\infty}}$-stable, regular,~$(n-a)$-branching tree~$T$ whose lowest vertex has height~$N$ such that~$I_{p_{\infty}}$-acts transitively on the branches of~$T.$ In Lemma~\ref{ split infinity}, we will find such a tree. 

Before proving Lemma~\ref{ split infinity}, we prove the following lemma, which explains the failure of our methods to produce surjective arboreal Galois representations in Theorem \ref{long} under the assumption that~$a=1.$ In Section \ref{sec 4}, we will utilize this lemma to produce examples of surjective arboreal Galois representations when~$n \equiv 7 \mod 8$ or~$n$ is in the range~$3\leq n \leq 6,$ i.e. in the cases that~$a=1.$

\begin{lemma}\label{infinity unramified}  Let~$l$ be a prime integer which does not divide~$n-1$. Assume that~$B\in \Q_{l}$ satisfies~$v_{l}(B) =-1.$
Then the polynomial$$g(X) := X(X-B)^{n-1} + B$$ splits completely over an unramified extension of~$\Q_{l}.$
\end{lemma}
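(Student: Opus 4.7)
The plan is to change variable via $X = BY$, so the equation $g(X) = 0$ transforms into $Y(Y-1)^{n-1} = -B^{-(n-1)} =: U$. Writing $B = b/l$ with $b \in \Z_l^{\times}$, we have $U = l^{n-1} u$ where $u := -b^{-(n-1)} \in \Z_l^{\times}$; in particular $v_l(U) = n-1 > 0$. The task reduces to showing that every root $Y \in \overline{\Q}_l$ of this transformed equation lies in an unramified extension of $\Q_l$.

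A short valuation argument on $Y(Y-1)^{n-1} = U$ shows that any root $Y \in \overline{\Q}_l$ falls into exactly one of two classes: either $v_l(Y) = n-1$ (exactly one root; call this case A), or $v_l(Y) = 0$ with $v_l(Y-1) = 1$ (the remaining $n-1$ roots; case B). Indeed, $v_l(Y) < 0$ is impossible since the right-hand side has positive valuation; if $v_l(Y) > 0$ then $Y-1$ is a unit and balancing valuations gives $v_l(Y) = n-1$; if $v_l(Y) = 0$ and $Y-1$ were also a unit, the left side would be a unit while the right side is not; so the remaining case is $v_l(Y) = 0$ with $v_l(Y-1) > 0$, forcing $(n-1)\, v_l(Y-1) = n-1$, i.e. $v_l(Y-1) = 1$ (here $l \nmid n-1$ is what allows us to conclude integrality of $v_l(Y-1)$).

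In case A, write $Y = l^{n-1} y$ with $v_l(y) = 0$. The equation becomes $y(l^{n-1} y - 1)^{n-1} = u$, which reduces modulo $l$ to $(-1)^{n-1} y \equiv u$. The derivative in $y$ reduces modulo $l$ to the unit $(-1)^{n-1}$, so Hensel's lemma lifts the unique residue root to a root $y \in \Z_l$, giving a root $X = l^{n-1} B y \in \Q_l$ of $g$. In case B, write $Y = 1 + l W$ with $v_l(W) = 0$. The equation becomes $W^{n-1}(1 + l W) = u$, reducing modulo $l$ to $W^{n-1} \equiv u$. Because $l \nmid n-1$, the polynomial $T^{n-1} - u$ is separable over $\F_l$ (its derivative $(n-1) T^{n-2}$ is nonzero and coprime to $T^{n-1} - u$ in $\F_l[T]$), so it has $n-1$ distinct roots in $\overline{\F}_l$, each lying in a finite unramified extension of $\F_l$. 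For each such residue root $w_0$, the derivative $(n-1) W^{n-2}(1 + l W) + l W^{n-1}$ reduces modulo $l$ to $(n-1) w_0^{n-2}$, which is a unit since $l \nmid n-1$ and $w_0 \neq 0$; Hensel's lemma in the appropriate unramified extension of $\Q_l$ lifts $w_0$ uniquely to an element $W$ of that extension.

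Combining the two cases, all $n$ values of $Y$, and therefore all $n$ roots $X = B Y$ of $g$, lie in unramified extensions of $\Q_l$; their compositum is itself an unramified extension over which $g$ splits completely. The main subtlety is the twofold use of the hypothesis $l \nmid n-1$ in case B: once to secure $n-1$ distinct residue roots of $T^{n-1} - u$ in $\overline{\F}_l$ (no ramification entering at the residue level), and again to make the Hensel derivative a unit so the lifts stay in unramified extensions. Without $l \nmid n-1$, extracting these $(n-1)$-th roots would typically introduce ramification of degree dividing $n-1$ and the conclusion would fail.
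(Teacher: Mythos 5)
Your proof is correct and takes essentially the same approach as the paper's. The paper substitutes $S(X) := B^{-1}g(B+X) = B^{-1}X^n + X^{n-1} + 1 \in \Z_l[X]$, reads off the root valuations (one of valuation $-1$, $n-1$ of valuation $0$) from the Newton polygon of $S$, and applies Hensel's lemma to the separable reduction $S \equiv X^{n-1}+1 \bmod l$; your substitution $X = BY$ reaches the same one-plus-$(n-1)$ split by a direct valuation case analysis and two separate Hensel applications, with $l \nmid n-1$ playing the same role of making the residue polynomial separable and the Hensel derivative a unit.
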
 
\begin{proof} Consider the polynomial~$$S(X) := B^{-1}f(B+X) = B^{-1} X^n + X^{n-1} + 1 \in \Z_{l}[X]$$ The polynomial~$S$ splits over a given field if and only if~$g$ does. We show~$S$ splits over an unramified extension of~$\Q_{l}.$ Consider the Newton polygon of~$S;$ it has one segment of slope~$0$ and length~$n-1,$ and one segment of length 1 and slope 1. It follows that~$S$ has~$n-1$ roots of valuation~$0$ and one root of valuation~$-1.$ The root of valuation~$-1$ is necessarily~$\Q_{l}$-rational. As for the roots of valuation~$0,$ since~$$S(X) \equiv X^{n-1} +1 \mod l$$ is separable, these roots have distinct images in the residue field. By Hensel's lemma, we conclude~$S$ splits over an unramifed extension  of~$\Q_{l}.$
\end{proof} 

\begin{lemma}\label{ split infinity}  Assume~$n>2.$ Let~$a \in\Z_+$ and~$A\in \Q$ satisfy the assumptions of Theorem~\ref{long}. Let~$p_\infty$ be a prime that witnesses Assumption \eqref{A699}.  Then there is a subtree~$T \subseteq T_f$ whose lowest vertex has height~$N$ which is ~{$I_{p_{\infty}}$-stable}, regular, and ~$(n-a)$-branching  such that~$I_{p_{\infty}}$ acts transitively on the branches of~$T.$
\end{lemma}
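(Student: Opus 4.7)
The plan is to construct $T$ inductively by always selecting the ``near-$A$'' preimages, using the Newton polygon of $f$ at $p_\infty$. Substituting $Y = X - A$, the equation $f(X) = v$ becomes $(Y+A)^a Y^{n-a} = v - A$, and a direct Newton polygon computation at $p_\infty$ shows that of the $n$ preimages of $v$, exactly $n-a$ (the \emph{near-$A$} preimages) satisfy $v_{p_\infty}(Y) = (a + v_{p_\infty}(v - A))/(n-a)$, while the remaining $a$ satisfy $v_{p_\infty}(X) = (n - a + v_{p_\infty}(v - A))/a$. I would take the base vertex $v_N$ of $T$ to be $0$ when $N = 0$, and (in the case $N = 1$, which forces $a = 1$ and $n > 2$) any root of $f$ lying in $\Q_{p_\infty}^{\un}$ with $v_{p_\infty}(v_N) = -1$; such a root exists by Lemma~\ref{infinity unramified} applied with $B = A$ and $l = p_\infty$. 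Then $T$ is built recursively by attaching, at each vertex $v$ already placed at level $k-1$, its $n-a$ near-$A$ preimages as level-$k$ children.

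Regularity, $(n-a)$-branching, and the height of the base vertex are immediate from the construction. For $I_{p_\infty}$-stability, the near-$A$ condition is a valuation condition and hence Galois-invariant, while $v_N$ is $I_{p_\infty}$-fixed in either case. For the transitivity of $I_{p_\infty}$ on the branches, I would show by induction on $k \geq N$ that the field $L_k$ obtained by adjoining the level-$k$ vertices of $T$ to $\Q_{p_\infty}^{\un}$ is a Galois extension of $\Q_{p_\infty}^{\un}$ that is totally ramified of degree exactly $(n-a)^{k-N}$. Galoisness follows because the generating set is $I_{p_\infty}$-stable. For the degree: the near-$A$ preimages of a level-$(k-1)$ vertex $v$ are, to leading order in $Y$, the $(n-a)$-th roots of $(v - A)/A^a$, and since $\zeta_{n-a} \in \Q_{p_\infty}^{\un}$ (using $p_\infty > n$), $L_k/L_{k-1}$ is Kummer cyclic of order dividing $n-a$. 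Because $p_\infty \nmid n-a$, the Galois extension $L_k/\Q_{p_\infty}^{\un}$ is tame and totally ramified and hence automatically cyclic. Granting the main obstacle below, $v_{p_\infty}(w - A)$ has denominator exactly $(n-a)^{k-N}$ in lowest terms for any level-$k$ vertex $w$, so $\Q_{p_\infty}^{\un}(w) = L_k$ and the pro-cyclic tame image of $I_{p_\infty}$ in $\Gal(L_k/\Q_{p_\infty}^{\un})$ acts regularly on the $(n-a)^{k-N}$ level-$k$ vertices; passing to the inverse limit gives transitivity on branches.

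The main obstacle is showing that $L_k/L_{k-1}$ has degree exactly $n-a$, equivalently that the Kummer element $(v_{k-1} - A)/A^a$ has order exactly $n-a$ in $L_{k-1}^\times/(L_{k-1}^\times)^{n-a}$. This reduces to showing its $L_{k-1}$-normalized valuation is coprime to $n - a$. Setting $u_k := v_{p_\infty}(v_k - A)$, the Newton polygon gives the recursion $u_k = (a + u_{k-1})/(n-a)$ with $u_0 = -1$ (when $N = 0$) or $u_1 = 0$ (when $N = 1$); writing $u_{k-1} = n_{k-1}/(n-a)^{k-1-N}$ in lowest terms, the desired valuation equals $n_{k-1} + a(n-a)^{k-1-N} \equiv n_{k-1} \pmod{n-a}$ for $k \geq N+2$, so coprimality reduces inductively to $\gcd(n_{k-1}, n-a) = 1$. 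The base case is handled by the hypotheses on $a$: for $a = 1$, the starting value $u_1 = 0$ yields $u_2 = 1/(n-1)$ with numerator $1$, trivially coprime; for $a \geq 2$ (where $n - a$ is prime and $a < n/2$), primality of $n-a$ together with $0 < a-1 < n-a$ gives $\gcd(a-1, n-a) = 1$.
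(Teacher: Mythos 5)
Your proposal is correct and follows essentially the same route as the paper's proof: both select the $n-a$ near-$A$ preimages at each level via Newton polygons, reduce transitivity to computing the ramification index of $\Q_{p_\infty}(r_k)/\Q_{p_\infty}$ through the valuation recursion $u_k = (a + u_{k-1})/(n-a)$ for $u_k = v_{p_\infty}(r_k - A)$, and invoke Lemma~\ref{infinity unramified} to move the base vertex to height~$1$ when $a = 1$. The one point where you are more explicit than the paper is in verifying the coprimality of the numerator of $u_k$ with $n - a$ (reducing to $\gcd(a-1, n-a) = 1$, which is where hypotheses~\eqref{a1}--\eqref{a3} on $a$ actually enter), which is what makes the denominator exactly $(n-a)^{k-N}$ rather than a proper divisor; the paper establishes the same valuation formula but leaves this last coprimality check implicit.
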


\begin{proof} 

Consider the subtree of~$T^{\infty}_f \subseteq T_f$ consisting of~$0$ and the roots~$r \in \Qbar_{p_{\infty}}$ of~$f^{\circ i}$ such that the valuation~$v_{p_{\infty}}(f^{\circ j}(r)) = -1$ for all non-negative integers~$j < i.$ Since the action of {$G_{\Q_{p_{\infty}}}$} on~$\Qbar_{p_{\infty}}$ preserves the valuation, the tree~$T^{\infty}_f$ is {$G_{\Q_{p_{\infty}}}$-stable}. 

We claim that~$T^{\infty}_f$ is a regular,~$(n-a)$-branching tree.  To see this, observe that if~$\epsilon$ is any element of~$\Qbar_{p_{\infty}}$ of valuation less than or equal to~$-1.$ Then the Newton polygon of~$$f(X) - \epsilon = X^a(X-A)^{n-a} + (A - \epsilon) = (A - \epsilon) + \sum_{j= a}^n  {{n-a}\choose{n-j}} A^{n-j} X^j$$ has two segments: one has length~$n-a$ and slope~$-v_{p_{\infty}}(A) = 1,$ and the other has length~$a$ and slope~$$\frac{v_{p_{\infty}}(A^{n-a})- v_{p_{\infty}}(A-\epsilon)} {a} = \frac{a-n - v_{p_{\infty}}(A-\epsilon)}{a} \leq \frac{a-n +1}{a} \leq 2 - \frac{n}{a},$$ which is less than~$1.$ It follows that the pre-image of~$\epsilon$ under~$f$ contains exactly~$n-a$ elements of valuation~$-1.$ Specializing to the pre-image tree of~$0,$ we deduce that the tree~$T^{\infty}_f$ is regular and~{$(n-a)$-branching}.

When~$a =1,$ by Lemma~\ref{infinity unramified}, the polynomial~$f$ splits completely over an unramified extension of~$\Q_{p_{\infty}}.$  In this case, choose~$T$ to be any of the~$(n-a)$ full subtrees of~$T^{\infty}_f$ whose lowest vertex has height~$1.$ The inertia group~$I_{p_{\infty}}$ acts on~$T.$ If~$a > 1,$ let~$T$ equal~$T^{\infty}_f.$ We claim that the inertia group~$I_{p_{\infty}}$ acts transitively on the branches of~$T.$

Let~$r_k$ be a root of~$f^{\circ k}$ contained in~$T^{\infty}_f.$ The ramification index of~$\Q_{p_{\infty}}(r_k)/\Q_{p_{\infty}}$ is the size of the orbit of~$r_k$ in~$\Qbar_{p_{\infty}}$   under~$I_{p_{\infty}}.$ We wish to show that~$I_{p_{\infty}}$ acts transitively on~$T.$ By induction on~$k,$ it suffices to show that~$r_k$ orbit has size:
\begin{equation} e_k := \begin{cases} (n-a)^k, &\mbox{if }  a >1, \text{ and } \\ (n-a)^{k-1}, &\mbox{if } a=1. 
\end{cases} \end{equation} 
We show~$e(\Q_{p_{\infty}}(r_k)/\Q_{p_{\infty}}) = e_k.$ Note that~$e(\Q_{p_{\infty}}(r_k)/\Q_{p_{\infty}})$ is at most~$e_k$ as the size of the orbit of~$r_k$ under~$I_{p_{\infty}}$ is at most the number of vertices in~$T$ that have height~$k$ in~$T^{\infty}_f.$ To conclude the of proof, it suffices to show that~$e_k$ greater than or equal to~$e(\Q_{p_{\infty}}(r_k)/\Q_{p_{\infty}}).$ 

 We will show a root~$r_k$ of~$f^{\circ k}~$ contained in~$T^{\infty}_f$ satisfies:
\begin{equation}\label{valuation infty} v_{p_{\infty}}((r_{k} - A)) = 1 + \sum_{i=1}^k \frac{n-1}{(n-a)^i}.\end{equation} 
For each integer~$i$ in the range~$0\leq i \leq k$ define~$$r_i := f^{\circ k-i}(r_k) \text{ and }  \epsilon_i := (r_i - A)/A.$$ Equation \eqref{valuation infty} is equivalent to the assertion that \begin{equation}\label{see-saw} v_{p_{\infty}}(\epsilon_0) = 0 \text{ and } v_{p_{\infty}}(\epsilon_i) =\frac{v_{p_{\infty}}(\epsilon_{i-1})}{n-a} +  \frac{n-1}{n-a}\text{ if } i>1.\end{equation} We verify \eqref{see-saw}. The case when~$i=0$ is clear, as~$\epsilon_0 = -1.$  Consider the case where~$i >0.$ Then since~$A(1 + \epsilon_i) = r_i,$ we see that~$\epsilon_i$ is a root of 
\begin{align*} g_i(X) &:= f(A(1+X)) - r_{i-1} \\&=  A^{n}(1+X)^aX^{n-a} + (A - r_{i-1}) \\&= A^{n}(1+X)^aX^{n-a} + \epsilon_{i-1}A. \end{align*} Examining the Newton polygon of~$g_i,$ one sees that~$g_i$ has exactly~$a$ roots of valuation~$0$ and~$n-a$ roots of valuation~$$-\frac{v_{p_{\infty}}(\epsilon_{i-1}A) - v_{p_{\infty}}(A^n)}{n-a}  =\frac{v_{p_{\infty}}(\epsilon_{i-1})}{n-a} +  \frac{n-1}{n-a} .$$ Since~$f -r_{i-1}$ has exactly~$n-a$ roots of valuation~$-1$, it must be the case that~$\epsilon_i$ is a root of~$g_i$ of valuation~$$\frac{v_{p_{\infty}}(\epsilon_{i-1})}{n-a} +  \frac{n-1}{n-a} > 0.$$ Hence, Equation \eqref{valuation infty} holds and~$e_k \geq e(\Q_{p_{\infty}}(r_k)/\Q_{p_{\infty}}).$ \end{proof}

We thus conclude that Hypothesis \ref{JC 2} of Lemma~\ref{Jordan's Criterion} holds for~$f.$

\vspace{10pt}

The final hypothesis of Lemma~\ref{Jordan's Criterion} is \emph{that for every positive integer~$k> N$ the permutation representation of~$\Gal(E_k/E_{k-1})$ acting on the roots of~$f^{\circ k}$ in~$E_k$ contains a transposition.} It is shown to hold for~$f$ for all values of~$k \geq 0$ by the following two lemmas. Recall our convention for writing a rational number as a fraction: for~$\alpha \in \Q,$ we denote by~$\alpha^+ \in \Z_+$ and~$\alpha^- \in \Z$  the unique \emph{positive} integer and integer, respectively, such that~$(\alpha^+, \alpha^-) = 1$ and~${\alpha = \frac{\alpha^+}{\alpha^-}.}$ 

Note that~$\frac{a}{n}A$ is a critical point of~$f,$ and therefore by the chain rule, a critical point of all iterates of~$f.$ The next lemma, Lemma \ref{new prime}, shows that for every~$k>0,$ there is a prime~$p_k$ (satisfying certain conditions), which does not divide~$A^+,$ so that ~$\frac{a}{n}A$ is a root of {$f^{\circ k} \mod p_k$}. By assumption \ref{A2}, all primes which ramify in~$E$ divide~$A^+.$ Hence,~$p_k$ is unramified in~$E.$ In Lemma \ref{local transposition}, we will show that under the Hypotheses of Lemma \ref{new prime} the inertia group~$I_{p_k}$ acts on the roots of~$f^{\circ k}$ as a transposition.      

\begin{lemma}\label{new prime}  Let~$a \in \Z_+$ and~$A\in \Q$ satisfy the assumptions of theorem~$\ref{long}.$ For each positive integer~$k,$ there exists a prime integer~$p_k \nmid nA^{-}A^+$ so that the~$p_k$-adic valuation of~$f^{\circ k}(\frac{a}{n}A)$ is positive and odd. 
\end{lemma}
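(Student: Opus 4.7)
The approach is a growth-versus-bad-valuation argument by contradiction. Write $\alpha_k := f^{\circ k}(aA/n)$ and let $S := \{p\text{ prime} : p \mid nA^+A^-\}$ be the set of ``bad'' primes. Assume for contradiction that no prime $p \notin S$ divides $\alpha_k$ to odd positive order, i.e.\ the part of $\alpha_k$ supported on primes outside $S$ is a perfect square; the goal is to derive a contradiction from hypotheses \eqref{A3}--\eqref{A7}.

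The Archimedean side uses \eqref{A3}, which rearranges to $|\mu|\,A^{n-1} > 2$ where $\mu := (a/n)^{a}((a-n)/n)^{n-a}$. Since $\alpha_1 = A(1 + \mu A^{n-1})$, this immediately gives $|\alpha_1| > |A| > 1$, and an induction using the multiplicative identity $\alpha_k - A = \alpha_{k-1}^{a}(\alpha_{k-1}-A)^{n-a}$ shows that $|\alpha_k|$ grows doubly exponentially in $k$.

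The local side is a case-by-case Newton polygon analysis at each prime $p \in S$. For an odd prime $p \mid A^+$, hypothesis \eqref{A5} gives $p \nmid n$, and the Eisenstein-type reasoning of Lemma~\ref{eisenstein} yields $v_p(\alpha_k) = v_p(A)$, independent of $k$. For $p \mid A^-$, hypothesis \eqref{A6} gives $p \nmid a(n-a)$, and a dual Newton polygon argument at the pole of $A$ produces an explicit, even value for $v_p(\alpha_k)$. For an odd prime $p \mid n$ with $p \nmid A$, hypothesis \eqref{A5} together with a Newton polygon analysis forces $v_p(\alpha_k) = 0$. At $p = 2$, which lies in $S$ by \eqref{A4}, a finer $2$-adic Newton polygon computation using \eqref{A4} and, in the case when $n$ is even, \eqref{A7}, pins down $v_2(\alpha_k)$ with a specific parity.

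Combining the contradiction hypothesis with these local computations would force $\alpha_k = \epsilon \cdot s \cdot m^{2}$, where $\epsilon = \pm 1$, $s$ is an $S$-unit chosen from a finite set of possibilities that is independent of $k$, and $m \in \Q^{\times}$. The Archimedean lower bound then forces $|m| \to \infty$. The main obstacle, which I expect to be the delicate step, is extracting the contradiction at this stage: the parity of $v_2(\alpha_k)$ fixed by the 2-adic computation must be incompatible with the square-decomposition $\alpha_k = \pm s \cdot m^{2}$ for any admissible $s$. Hypotheses \eqref{A4} and \eqref{A7} are precisely calibrated to force this 2-adic parity clash, whereupon the contradiction yields a good prime $p_k \nmid nA^+A^-$ with $v_{p_k}(\alpha_k) > 0$ and odd, as desired.
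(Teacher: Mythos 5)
Your general strategy---control the valuations of $\alpha_k := f^{\circ k}(aA/n)$ at the ``bad'' primes $S$ and then exhibit a $2$-adic obstruction to the non-$S$ part being a square---is aligned with what the paper does, but the proposal stops precisely at the step that constitutes the proof, and the mechanism you gesture at is not quite the right one. The paper works with the normalization $c_k := \alpha_k/A$, which satisfies the clean recursion $c_k = A^{n-1}c_{k-1}^a(c_{k-1}-1)^{n-a} + 1$, $c_0 = a/n$. The $2$-adic input from \eqref{A4} is then that $c_k \equiv 1 \pmod 8$, i.e.\ $c_k$ is a \emph{unit square} in $\Z_2^\times$. In particular $v_2(\alpha_k) = v_2(A)$ for every $k$, so ``a specific parity of $v_2(\alpha_k)$'' cannot be the source of the obstruction. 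The actual heart of the argument, which you label ``the delicate step'' and do not carry out, is the exact closed form for the denominator,
$$c_k^- = (A^-)^{n^k-1}\,n_2^{\,n^k}\,(-1)^{(n-a)n^{k-1}},\qquad n_2 := n/2^{v_2(n)},$$
proved by induction. This formula simultaneously establishes that $c_k^+$ is coprime to $nA^-A^+$ (since every prime of $n_2A^-$ divides $c_k^-$, and $2\mid A^+$ handles the rest) and, via the case analysis on $n\bmod 8$ using \eqref{A7} and \eqref{a2}--\eqref{a3}, that $c_k^-\not\equiv 1\pmod 8$, so that $c_k^+ = c_k\,c_k^-$ is a $2$-adic non-square and hence not a perfect square. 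Hypothesis \eqref{A3} is used not to force $|m|\to\infty$ but to pin down the \emph{sign} in the displayed formula (through the inequality $|c_k-1|>2$), which is essential to the mod-$8$ computation of $c_k^-$.

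Two of your case-by-case claims are also incorrect, confirming the computation has not been done: for an odd prime $p\mid n$ with $p\nmid A^+$, the formula gives $v_p(\alpha_k) = -n^k v_p(n_2) < 0$, not $0$; and for $p\mid A^-$, $v_p(\alpha_k) = -n^k v_p(A^-)$ need not be even. Neither error affects the conclusion since those valuations are negative, but they are symptoms of the gap: without the exact identity for $c_k^-$ the coprimality of $c_k^+$ to $nA^-A^+$ and the mod-$8$ non-square conclusion are both unavailable, and the proof does not close.
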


\begin{proof}  

For each positive integer k, let~$c_k$ denote~$\frac{f^{\circ k}(\frac{a}{n}A)}{A}.$ To prove this lemma it suffices to show for all positive integers~$k$ that~$c_k^+$ is relatively prime to~$nA^{-}A^+$ and is not a perfect square. We will show the following. First, we show that~$c_k^+$ and~$A^+$ are relatively prime. Then, we show that~$c_k = c_k^+/c_k^-$ is a square in~$\Z_2^{\times}.$ To finish the proof, we analyze the denominator~$c_k^-.$ We show that if~$n_2 = n/2^{v_2(n)},$ then~$n_2A^-|c_k^-$ and that~$c_k^-$ is not a square in~$\Z_2^{\times}.$ Noting that~$2|A^+$ by Hypothesis~$\eqref{A4},$ these claims imply that~$nA^-A^+$ and~$c_k^+$ are relatively prime, and that~$c_k^+$ is not a square.   

Define~$c_0 = \frac{a}{n}.$ Then for all~$k>0,$ 
\begin{equation}\label{inductive formula} c_k = A^{n-1}c_{k-1}^a(c_{k-1} - 1)^{n-a} + 1. \end{equation}  Let~$p\neq 2$ be a prime integer factor of~$A^+.$ By Assumption~\eqref{A5}, the prime~$p$ is not a factor of~$n.$ Hence,~$c_0$ is~$p$-integral. Using Equation \eqref{inductive formula}, one concludes by induction that~$c_k$ is~$p$-integral and~$c_k \equiv 1 \mod p.$  

Now consider the case where~$p=2.$ By Hypothesis~\eqref{A4}, the valuation~$v_2(A)$ satisfies~$${v_2(A) \geq \frac{3}{n-1} + \frac{n}{n-1}v_2(n)> 0}.$$  Combining this with Equation \eqref{inductive formula}, we observe~$$v_2(c_1 - 1) = v_2\left(A^{n-1}\left(\frac{a}{n}\right)^a\left(\frac{a}{n} - 1\right)^{n-a} \right) \geq (n-1)v_2(A)  - nv_2(n) \geq 3,$$ and~$$v_2(c_k - 1) = v_2\left(A^{n-1}\left(c_{k-1} \right)^a\left(c_{k-1} - 1\right)^{n-a} \right) \geq v_2(c_{k-1} -1),$$ if~$k >1.$ Therefore,~$c_k$ is~$2$-integral and congruent to~$1 \mod 8.$ We conclude that~$c_k^+$ and~$A^+$ are relatively prime. Furthermore, recalling that the squares in~$\Z_2^{\times}$ are exactly the elements congruent to~$1 \mod 8,$ we conclude that~$c_k$ is a square in~$\Z_2^{\times}.$ 

Now, we examine~$c_k^-.$ We've seen that~$c_k^-$ is prime to 2.  Let~$n_2 := n/2^{v_2(n)}.$ We will show by induction that \begin{equation}\label{denominator} c_k^- = (A^-)^{n^k -1}n_{2}^{n^k}(-1)^{(n-a)n^{k-1}}.\end{equation} This equation shows that~$c_k^+$ is prime to~$n_2A^-.$  More subtly, Equation \eqref{denominator} shows~${c_k^- \not\equiv 1 \mod 8},$ and therefore is not a square in~$\Z_{2}^{\times}$. To see this, observe that
\begin{align*}  (A^-)^{n^k -1}n_{2}^{n^k}(-1)^{(n-a)n^{k-1}} & \equiv
\begin{cases} \pm A^- \mod 8  &\mbox{if } n \equiv 0 \mod 2 \\
(-1)^{n-a} \mod 8  & \mbox{if } n \equiv 1 \mod 8\\
 \pm n \mod 8  &\mbox{if } n \equiv 3,5 \mod 8 \\
  n(-1)^{(n-a)} \mod 8  &\mbox{if } n \equiv 7 \mod 8. 
  \end{cases} \\ & \equiv \begin{cases} \pm 3 \mod 8 &\mbox{if } n \equiv 0 \mod 2, \text{ by Assumption~\eqref{A7},}   \\
-1 \mod 8 & \mbox{if } n \equiv 1 \mod 8,  \text{ by Assumption~\eqref{a3},}  \\
 \pm 3 \mod 8 &\mbox{if } n \equiv 3,5 \mod 8 \\
  -1 \mod 8 & \text{if }
       \!\begin{aligned}[t]
       n \equiv 7 \mod 8, &\text{ as~$ n-a = n-1$ is even} \\ &\text{ by Assumption~\eqref{a2}.} \end{aligned}
  \end{cases} 
\end{align*}   
Hence, to conclude the proof, it suffices to confirm Equation~\eqref{denominator}. 

We will prove Equation~\eqref{denominator} by induction on~$k.$ We begin by showing the equation holds when~$k =1.$ The element~$$c_1 = A^{n-1}\left(\frac{a}{n}\right)^a\left(\frac{a}{n} - 1\right)^{n-a} + 1 = (-1)^{n-a}
\frac{(A^+)^{n-1}a^a(n-a)^{n-a}}{(A^-)^{n-1}n^n} +1.$$ So a prime~$p$ divides~$c_1^-$ only if~$p|A^-$ or~$p|n_2.$ To deduce Equation~\eqref{denominator} in this case, we must show that for all~$p|A^-n_2$ the valuation: \begin{equation}\label{valuation c_1} v_p(c_1^-) = v_p((A^-)^{n-1}n_2^n), \end{equation}  and the sign \begin{equation}\label{sign} \frac{c_1^-}{|c_1^-|} = (-1)^{n-a}. \end{equation} These equalities hold if and only if

\begin{equation}\label{gcd c_1} (A^-n_2, A^+a(n-a)) = 1, \end{equation} and  \begin{equation}\label{ineq c_1} \frac{(A^+)^{n-1}a^a(n-a)^{n-a}}{(A^-)^{n-1}n^n} > 1, \end{equation} respectively. We prove~\eqref{gcd c_1} and~\eqref{ineq c_1}. By Assumption~\eqref{A6}, if~$p$ divides~$n_2,$ then~$p$ is prime to~$A^+$. Since~$a$ and~$n$ are relatively prime, a prime~$p$ dividing~$n_2$ does not divide~$a(n-a).$ Similarly, if~{$p$ divides~$A^-,$} then by definition~$p$ is prime to~$A^{+},$ and by Assumption~\eqref{A6}, the prime~$p$ does not divide~$a(n-a)$. We conclude Equation~\eqref{gcd c_1} holds.  To see~\eqref{ineq c_1}, observe that
\begin{equation}\label{bigger than 2}\frac{(A^+)^{n-1}a^a(n-a)^{n-a}}{(A^-)^{n-1}n^n} = (A\left(\frac{a}{n}\right)^{\frac{a}{n-1}}\left| \frac{a}{n} - 1\right|^{\frac{n-a}{n-1}})^{n-1} >2 \end{equation}
by Assumption~\eqref{A3}. We conclude Equation~\eqref{denominator} holds when~$k = 1.$

Now assume that Equation~\eqref{denominator} holds~$k \geq 1,$ we show Equation~\eqref{denominator} holds for~$k+1.$  Observe that
$$c_{k+1} = A^{n-1}c_{k}^a(c_{k} - 1)^{n-a} + 1 =
\frac{(A^+)^{n-1}(c_k^+)^a((c_k - 1)^+)^{n-a}}{(A^-)^{n-1}(c_k^-)^n} +1.$$ Hence, a prime~$p$ divides~$c_{k+1}^-$ only if~$p | A^-c_k^-.$ By induction, it follows that all prime divisors of~$c_{k+1}^-$ must divide~$A^-n_2.$ Note that, 
$$(A^-)^{n-1}(c_k^-)^n = (A^-)^{n-1}((A^-)^{n^k-1}n_2^{n^{k-1}})^n= (A^-)^{n^k -1}n_{2}^{n^k}.$$ Hence, to show Equation~\eqref{denominator}, it is sufficient to show for all~$p| A^{-}n_2$ the valuation \begin{equation}\label{valuation c_k} v_p(c_{k+1}^-) = v_p((A^-)^{n-1}(c_k^-)^n),\end{equation} and that the sign \begin{equation}\label{sign c_k} \frac{c_{k+1}^-}{|c_{k+1}^-|} = \left( \frac{c_{k}^-}{|c_{k}^-|} \right)^n.\end{equation} These equations are implied by
 \begin{equation}\label{gcd c_k} (A^-n_2, A^+c_k^+(c_k - 1)^+) =1, \end{equation} and 
\begin{equation}\label{size c_k} \left| \frac{(A^+)^{n-1}(c_k^+)^a((c_k - 1)^+)^{n-a}}{(A^-)^{n-1}(c_k^-)^n} \right| = \left| A^{n-1}c_{k}^a(c_{k} - 1)^{n-a} \right| = |c_{k+1} -1|  > 2   > 1,\end{equation} respectively.  

We conclude the proof by demonstrating equations~\ref{gcd c_k} and~\ref{size c_k}. Because~$n_2$ and~$A^+$ are relatively prime (by Assumption~\eqref{A5}), and~$A^-n_2$ divides~$c_k^-$ and~$A^-n_2$ divides~$(c_k-1)^-$ by induction, we conclude equality~\ref{gcd c_k} holds. By Equation~\eqref{bigger than 2}, we see that~$|c_{k} -1|  > 2$ when~$k = 1.$ It follows by induction that  
$$|c_{k+1} -1|  = \left| A^{n-1}c_{k}^a(c_{k} - 1)^{n-a} \right|  > \left| A|^{n-1} \right| \left| c_{k} \right|^a \left|(c_{k} - 1) \right|^{n-a} > 2^{n-a}.~$$ Hence, Equation~\eqref{size c_k} holds.\end{proof}

By Lemma \ref{new prime}, the prime~$p_k$ does not divide~$A^+.$ Therefore by Assumption~\eqref{A2}, this prime is unramified in~$E.$ To finish the proof of Theorem \ref{long}, we show that some element of the inertia group~$I_{p_k} \leqslant G_E$ acts on the roots of~$f^{\circ k}$ as a transposition.

\begin{lemma}\label{local transposition} Let~$a \in\Z_+$ and~$A\in \Q$ satisfy the assumptions of theorem~$~\ref{long}.$ Let~$p_k$ be a prime integer such that~$p_k\nmid n{A^-A+}$ and the~$p_k$-adic valuation of~$f^{\circ k}(\frac{a}{n}A)$ positive and odd, then
 
 \begin{enumerate}
 
 \item\label{double root} there is a factorization of~$f^{\circ k}(X) \equiv  g(X)b(X) \mod p_k$ as where~$g(X)$ and~$b(X)$ are coprime,~$g(X)$ is a separable, and~$b(X) = (X - \frac{a A}{n})^2$, and
\item\label{loc trans} the inertia group~$I_{p_k} \leqslant G_{\Q_{p_k}} \leqslant G_E$ acts on the set of roots~$f^{\circ k}$ in~$\overline{\Q}_{p_k}$ as a transposition. 
\end{enumerate} 
\end{lemma}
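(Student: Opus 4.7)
The plan is to derive (1) from a Taylor expansion of $f^{\circ k}$ around $\tfrac{aA}{n}$ combined with an analysis of the forward $f$-orbit of $\tfrac{aA}{n}$ modulo $p_k$, and then to deduce (2) by applying Hensel's lemma and a Newton polygon argument.

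For (1), the key observation is that the hypothesis $v_{p_k}(f^{\circ k}(\tfrac{aA}{n})) > 0$ forces the forward orbit $\{f^{\circ j}(\tfrac{aA}{n})\}_{j=1}^{k-1}$ to avoid the critical set $\{0, A, \tfrac{aA}{n}\}$ of $f$ modulo $p_k$: if $f^{\circ j}(\tfrac{aA}{n}) \equiv 0$ or $A \mod p_k$, then since $A$ is a fixed point of $f$, one obtains $f^{\circ k}(\tfrac{aA}{n}) \equiv A \not\equiv 0 \mod p_k$; and $f^{\circ j}(\tfrac{aA}{n}) \equiv \tfrac{aA}{n} \mod p_k$ for some $1 \leq j < k$ propagates similarly by induction on $k$. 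A parallel collision argument shows $p_k \nmid a(n-a)$, since $p_k \mid a$ or $p_k \mid n-a$ would force $f(\tfrac{aA}{n}) \equiv A \mod p_k$. Together with $p_k \nmid nA^-A^+$, these facts give $v_{p_k}(f''(\tfrac{aA}{n})) = 0$ (direct computation from $f''(\tfrac{aA}{n}) = n(\tfrac{aA}{n})^{a-1}(\tfrac{(a-n)A}{n})^{n-a-1}$) and $v_{p_k}(f'(f^{\circ j}(\tfrac{aA}{n}))) = 0$ for $1 \le j < k$, so the chain rule yields
\[ v_{p_k}\bigl((f^{\circ k})''(\tfrac{aA}{n})\bigr) = v_{p_k}(f''(\tfrac{aA}{n})) + \sum_{j=1}^{k-1} v_{p_k}\bigl(f'(f^{\circ j}(\tfrac{aA}{n}))\bigr) = 0. \]
Combined with $(f^{\circ k})'(\tfrac{aA}{n}) = 0$ globally and $v_{p_k}(f^{\circ k}(\tfrac{aA}{n})) > 0$, the Taylor expansion at $\tfrac{aA}{n}$ shows that $\tfrac{aA}{n}$ is a root of $f^{\circ k} \bmod p_k$ of multiplicity exactly $2$. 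Any other hypothetical multiple root $\bar\alpha$ would have some iterate $f^{\circ j}(\bar\alpha)$ in $\{0, A, \tfrac{aA}{n}\} \bmod p_k$, and the same case analysis rules this out, so the complementary factor $g$ is separable and coprime to $X - \tfrac{aA}{n}$.

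For (2), Hensel's lemma lifts the coprime factorization of (1) to $f^{\circ k}(X) = B(X)G(X)$ in $\Z_{p_k}[X]$, with $\deg B = 2$, $B \equiv (X - \tfrac{aA}{n})^2 \mod p_k$, and $G \equiv g \mod p_k$. Separability of $g$ implies that each irreducible factor of $G$ defines an unramified extension of $\Q_{p_k}$, so $I_{p_k}$ fixes the roots of $G$. Writing $B(Y + \tfrac{aA}{n}) = Y^2 + \alpha Y + \beta$, I note that $\beta = f^{\circ k}(\tfrac{aA}{n})/G(\tfrac{aA}{n})$ has odd positive $p_k$-valuation (since $v_{p_k}(G(\tfrac{aA}{n})) = 0$), while differentiating $BG = f^{\circ k}$ at $\tfrac{aA}{n}$ and using $(f^{\circ k})'(\tfrac{aA}{n}) = 0$ gives $\alpha = -\beta G'(\tfrac{aA}{n})/G(\tfrac{aA}{n})$, so $v_{p_k}(\alpha) \geq v_{p_k}(\beta)$. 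The Newton polygon of $B(Y + \tfrac{aA}{n})$ is therefore a single segment of slope $-v_{p_k}(\beta)/2$, a half-integer; hence $B$ is irreducible over $\Q_{p_k}$ and its roots generate a tamely ramified quadratic extension (note $p_k \neq 2$ since $2 \mid A^+$ by Hypothesis~\eqref{A4}). Thus $I_{p_k}$ swaps the two roots of $B$ and fixes every other root of $f^{\circ k}$, acting as a transposition.

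The main obstacle is the orbit-avoidance argument in (1): one must rule out in a single sweep both the \emph{extra} multiple roots of $f^{\circ k} \bmod p_k$ and the apparent possibility that $v_{p_k}(f''(\tfrac{aA}{n}))$ is positive when $p_k \mid a(n-a)$. Both phenomena are controlled by the same observation that any collision of the $f$-orbit of $\tfrac{aA}{n}$ with a critical value modulo $p_k$ before step $k$ would ultimately force $f^{\circ k}(\tfrac{aA}{n}) \equiv A \mod p_k$, contradicting the hypothesis. Once this is in hand, the remainder of the argument is routine Hensel and Newton polygon bookkeeping.
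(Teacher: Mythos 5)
Your proof is correct. For Claim (1) you take a genuinely different, though parallel, route from the paper: you compute $(f^{\circ k})''(\tfrac{aA}{n})$ explicitly via the chain rule (using $(f^{\circ j})'(\tfrac{aA}{n})=0$ for $j\geq 1$ to collapse the formula to $f''(\tfrac{aA}{n})\prod_{j=1}^{k-1}f'(f^{\circ j}(\tfrac{aA}{n}))$), show it has $p_k$-valuation zero, and conclude the multiplicity is exactly $2$ from the Taylor expansion; the paper instead lifts the multiple root $c$ to $\overline{\Z}_{p_k}$, introduces the ``ancestor distance'' $l(r,r')$ between lifts of $c$, shows the invariant $N(c)$ equals $1$, and reduces the multiplicity to that of $\tfrac{aA}{n}$ as a critical point of the single polynomial $f$. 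Both arguments hinge on the same orbit-avoidance observation — that $f^{\circ j}(\tfrac{aA}{n})$ cannot land in $\{0,A,\tfrac{aA}{n}\}$ modulo $p_k$ for $1\leq j<k$, since this would force $f^{\circ k}(\tfrac{aA}{n})\equiv A\not\equiv 0$ — and your explicit treatment of the possibility $p_k\mid a(n-a)$ via the same collision argument is a point the paper leaves implicit (it is needed for the ``critical point has multiplicity $1$'' step). Your ``induction on $k$'' phrasing for the case $f^{\circ j}(\tfrac{aA}{n})\equiv \tfrac{aA}{n}$ is a bit loose; what is happening is a one-step reduction to $f^{\circ(k-j)}(\tfrac{aA}{n})\equiv 0$ with $1\leq k-j<k$, which then falls to the $\equiv 0$ case. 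Your treatment of Claim (2) is essentially identical to the paper's Hensel-plus-Newton-polygon argument; your observation that $p_k\neq 2$ (which underwrites both the second-derivative test and the tameness remark) is a welcome explicit addition, and your display correctly writes $(f^{\circ k})'$ and $f^{\circ k}$ where the paper's has a typographical $f'$ and $f$.
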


\begin{proof}[Proof of Claim~\ref{double root}] We show that~$\frac{a}{n}A$ is the unique multiple root of~$f^{\circ k}$  and its multiplicity is 2. 

We begin by showing~$\frac{a}{n}A$ is a multiple root of~$f^{\circ k}.$ A polynomial over a field~$F$ has a multiple root at~$\alpha\in \overline{F}$ if and only if~$\alpha$ is both a root and a critical point. By assumption, the value~$\frac{a}{n}A$ is a root of~$f^{\circ k} \mod p_k.$  To see~$\frac{a}{n}A$ is a multiple root, observe that 
\begin{equation} (f^{\circ k})'(X) = f'(X)\prod_{0 < i <k} f'(f^{\circ i}(X)) \end{equation} 
and 
\begin{equation}\label{cp f}\begin{aligned} f'(X) &= aX^{a-1}(X-A) + (n-a)X^a(X-A)^{n-a -1}\\&= X^{a-1}X^{n-a-1}(nX - aA), \end{aligned}\end{equation} and therefore~$\frac{a}{n}A$ is a critical point of~$f^{\circ k}.$  

Now assume~$c$ is a root of~$f^{\circ k} \mod p_k$ with multiplicity~$m>1.$ Let~$\overline{\Z}_{p^k}$ be the ring of integers of~$\overline{\Q}_{p_k}$ and~$\mathfrak{m}$ be its maximal ideal. Because~$f^{\circ k}$ is separable, there exists exactly~$m$ roots~$r_1, \ldots, r_m \in \overline{\Z}_{p_k}$ of~$f^{\circ k}$ such that~$r_i \equiv c \mod \mathfrak{m}.$ Let~$L(c) := \{r_1, \ldots, r_m\}.$ To prove Claim~\ref{double root}, it suffices to show~$c$ equals~$\frac{a}{n}A$ and~$m= |L(c)|$ equals~$2.$ 

For each pair of pair of distinct roots~$r$ and~$r'$ lifting~$c,$ let~$l(r,r')$ be the smallest positive integer such that~${f^{\circ l(r,r')}(r) = f^{\circ l(r,r')}(r').}$ Considering~$r$ and~$r'$ as vertices of the tree~$T_f,$ the value~$l(r,r')$ is the distance to the most common recent ancestor between~$r$ and~$r'$. Let~$$N(c) := \max\{l(r,r'): r,r' \in L(c) \}.$$  

We claim that if~$N(c)$ equals~$1$, then~$c$ equals~$\frac{a}{n}A$ and~$m$ equals~$2.$ To see why, assume~$N(c)$ equals~$1.$ Then~$r_1,\ldots, r_m$ are all roots of the polynomial~$f(X) -f(r_1).$ Therefore,~$c$ is a critical point of~$f(X) \mod \mathfrak{m}.$ From Equation~\eqref{cp f}, one observes that the critical points of~$f(X)$ are~$0,A$ and~$\frac{a}{n}A.$ By assumption~$f^{\circ k}(c) \equiv 0 \mod \mathfrak{m}.$ On the other hand, since~$A$ is a fixed point of~$f$ and~$f(0) =A,$~$$f^{\circ k} (0) = f^{\circ k}(A) = A \not\equiv 0 \mod \mathfrak{m}.$$ Thus,~$c$ must equal~$\frac{a}{n}A.$ The critical point~$\frac{a}{n}A$ has multiplicity~$1$. Therefore,~$m = L(c) = 2.~$ 

To finish the proof the claim, we must show~$N(c) =1.$ Assume this is not the case, and let~$r$ and~$r'$ be a pair of lifts such that~$l := l(r,r') > 1.$ Then~$f^{\circ l-1}(r)$ and~$f^{\circ l -1}(r')$ are distinct roots of the polynomial~$$g_{r,r'}(X) := f(X) - f^{\circ l}(r) = f(X) - f^{\circ l}(r')$$ which reduce to~$f^{\circ l-1}(c)$ modulo~$\mathfrak{m}.$ It follows~$f^{\circ l-1}(c)$ is a root of~$g'_{r,r'}(X) = f'(X),$ and hence equals~$A$ or~$0$ or~$\frac{a}{n}A.$ Since~$f^{\circ k}(c) \equiv 0 \mod p_k$ and~$$f^{\circ k - l-1} (0) = f^{\circ k- l-1}(A) = A \not\equiv 0 \mod p_k,$$ it must be the case that~$f^{\circ l-1}(c)$ equals~$\frac{a}{n}A.$ But this implies, as~$0 \equiv  f^{\circ k}(\frac{a}{n}A) \mod p_k$ by assumption, that  \begin{align*} 0 &\equiv  f^{\circ k}(\frac{a}{n}A) \mod p_k \\ &\equiv  f^{\circ k}(f^{\circ l-1}(c)) \mod p_k  \\ &\equiv   f^{l-1}(f^{\circ k}(c))  \mod p_k \\ &\equiv  f^{l-1}(0) \mod p_k \\ &\equiv A \mod p_k,\end{align*} a contradiction. 
\end{proof}

\begin{proof}[Proof of Claim~\ref{loc trans}] The factorization~$b(x)g(x) = f(x),$ appearing in Claim~\ref{double root}, lifts by Hensel's Lemma to a factorization~$$B(X)G(X) = f(X)$$ in~$\Z_{p_k}[X],$  where~$B(X)$ and~$G(X)$ are monic polynomials such that~$$B \equiv b \mod p_k \text{ and  } G \equiv g \mod p_k.$$  As~$g$ is separable,~$G$ splits over an unramified extension of~$\Q_{p_k}.$ To show~$I_{p_k}$ acts a transposition, we show the splitting field of~$B$ is a ramified quadratic extension of~$\Q_{p_k}.$ 

Consider the quadratic polynomial~$B(X+\frac{a}{n}A) = X^2 +B'(\frac{a}{n}A)X + B(\frac{a}{n}A).$ As

$$B'(\frac{a}{n}A)G(\frac{a}{n}A) + B(\frac{a}{n}A)G'(\frac{a}{n}A) = f'(\frac{a}{n}A) = 0,$$ and~$$G(\frac{a}{n}A) \equiv g(\frac{a}{n}A) \not\equiv 0 \mod p_k,$$ we observe~$v_{p_k}(B'(\frac{a}{n}A)) \geq v_{p_k}(B(\frac{a}{n}A)).$ It follows that the Newton polygon~$B(X+\frac{a}{n}A)$ has a single segment of slope~$\frac{v_{p_k}(B(\frac{a}{n}A))}{2}$ and width 2.  As

$$v_{p_k}(B(\frac{a}{n}A)) = v_{p_k}(f(\frac{a}{n}A)) - v_{p_k}(G(\frac{a}{n}A)) = v_{p_k}(f(\frac{a}{n}A))$$ the slope is non-integral. We conclude~$B(X + \frac{a}{n}A)$ is irreducible and splits over a ramified (quadratic) extension. 
\end{proof}

\vspace{10 pt}

Having verified that the conditions of Lemma~\ref{Jordan's Criterion} hold for~$f,$ we conclude that Theorem~\ref{long} is true. 

\section{Bridging the Gap} \label{sec 4}
Having proven Theorem~\ref{long}, we observe that our main theorem, Theorem~\ref{small}, holds in polynomial degrees~$n$ satisfying~$n \not\equiv 7 \mod 8$ and~$n\geq 6,$ or~$n=2.$ In this section, we prove that Theorem~\ref{small} holds in all remaining cases. 

Assume that either~$n \equiv 7 \mod 8,$ or~$n$ is in the range~$3\leq n \leq 6.$ Define~$$f(X,t) := X(X-t)^{n-1} +t \in \Q[t,X].$$ By Theorem~\ref{long}, there are infinitely many values of~$A \in \Q$ such that the image of the arboreal Galois representation~$\rho_{f(X,A) }: G_E \rightarrow \Aut(T_{f(X,A) })$ associated to the specialization~$$f(X,A) = X(X-A)^{n-1} + A \in \Q[X]$$ contains~$\Gamma(1).$  To prove Theorem~\ref{small}, we will use the Hilbert Irreducibility Theorem to show that for some infinite subset of these values the splitting field of the specialization~$f(X,A)$ over~$E$ is an~{$S_n$-extension}. For our first step, we calculate the geometric Galois group of the~$1$-parameter family~$f(X,t).$

\begin{lemma}\label{function field}  Let~$F$ be a field of characteristic~$0.$ The splitting field of the polynomial~$f(X,t)$ over~$F(t)$ is an~{$S_n$-extension}. 
\end{lemma}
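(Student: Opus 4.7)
The plan is to compute the ramification data of the degree-$n$ cover $\pi_t\colon C \to \mathbb{A}^1_t$, where $C = \{f(X,t)=0\} \subset \mathbb{A}^2$, and then to use the product-of-monodromy relation in the tame fundamental group of $\mathbb{P}^1_{\overline{F}} \setminus \{\text{branch locus}\}$ to force the geometric Galois group to be all of $S_n$. First, viewed in $F[t][X]$, the polynomial $f(X,t) = X(X-t)^{n-1} + t$ has constant term $t$ and every non-leading coefficient (each of the form $\binom{n-1}{k}(-t)^k$) divisible by $t$; hence $f$ is Eisenstein at the prime $(t)\subset F[t]$, so irreducible over $F(t)$. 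This makes $G := \Gal(f/F(t))$ and its geometric counterpart $G^{\mathrm{geo}} := \Gal(f/\overline{F}(t))$ transitive subgroups of $S_n$.

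Next I would identify all branch points of $\pi_t$. Since $f_X = (X-t)^{n-2}(nX-t)$, ramification can only occur above $X = t$ or $X = t/n$. The first, combined with $f = 0$, forces $t = 0$, where $f(X,0) = X^n$ yields a single fiber point of ramification index $n$ and thus an inertia generator $\sigma_0$ acting as an $n$-cycle on the roots. The second gives $f(t/n,t) = (-1)^{n-1}t^n(n-1)^{n-1}/n^n + t$, which vanishes exactly when $t^{n-1} = (-1)^n n^n/(n-1)^{n-1}$, producing $n-1$ distinct branch values $t_1,\ldots,t_{n-1} \in \overline{F}^{\times}$; a routine check (of $f_{XX}$ and $f_t$ at $(t_i/n, t_i)$) confirms $C$ is smooth at these fiber points and that each is a simple critical value, hence each contributes a transposition $\tau_i$ as inertia generator. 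At $t = \infty$, the substitution $X = t + W$, $s = 1/t$ converts the equation into $sW^n + W^{n-1} + 1 = 0$; at $s=0$ this has $n-1$ distinct roots (the $(n-1)$-th roots of $-1$), each lifting uniquely by Hensel, while the remaining root satisfies $X \to 0$. Thus every branch over $\infty$ is analytic in $s$, so no ramification occurs there.

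In characteristic zero, the tame fundamental group of $\mathbb{P}^1_{\overline{F}} \setminus \{0, t_1, \ldots, t_{n-1}\}$ is the profinite completion of $\langle \gamma_0, \gamma_1, \ldots, \gamma_{n-1} \mid \gamma_0 \gamma_1 \cdots \gamma_{n-1} = 1\rangle$, and the monodromy representation sends $\gamma_0 \mapsto \sigma_0$ and $\gamma_i \mapsto \tau_i$. This yields $\tau_1 \cdots \tau_{n-1} = \sigma_0^{-1}$, an $n$-cycle. If $\langle \tau_1, \ldots, \tau_{n-1}\rangle$ were not transitive on $\{1,\ldots,n\}$, then the set would split into multiple orbits stable under every $\tau_i$ and therefore under their product $\sigma_0^{-1}$, contradicting transitivity of the $n$-cycle. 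Hence $\langle \tau_1, \ldots, \tau_{n-1}\rangle$ is a transitive subgroup of $S_n$ generated by transpositions, and therefore equals $S_n$. Thus $G^{\mathrm{geo}} = S_n$, and the containments $G^{\mathrm{geo}} \leq G \leq S_n$ force $G = S_n$.

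The main technical point is the analysis at $t = \infty$, since the projective closure of $C$ in $\mathbb{P}^2$ is singular at $[1{:}1{:}0]$: without the Newton-polygon/Puiseux computation in $s = 1/t$, one cannot rule out hidden inertia at infinity, and hence cannot conclude that the monodromy relation among the chosen generators takes the clean form $\sigma_0 \tau_1 \cdots \tau_{n-1} = 1$ that drives the last step.
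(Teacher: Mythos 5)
Your proof is correct, and it takes a genuinely different and, in my view, cleaner route than the paper's. The paper first applies the Tschirnhaus substitution $g(X,t)=f(X-t,-t)=X^n-tX^{n-1}-t$, labels the roots near $t=0$ using the $n$-cycle monodromy coming from the $t$-Eisenstein property, and then invests most of the proof in an explicitly analytic argument (a sector $S$ in $\C$, arguments of roots, and a continuity/non-crossing analysis along the path from $\alpha_0$ to $0$) to pin down the monodromy transposition at the nearest finite branch point as exactly $(0\ 1)$; it then invokes the generation of $S_n$ by an $n$-cycle and an adjacent transposition. You keep the original $f$, carry out the same essential ramification computation (full ramification at $t=0$ giving an $n$-cycle $\sigma_0$; simple branch points $t_1,\dots,t_{n-1}$ giving transpositions $\tau_i$; no branching at $\infty$ via the Newton polygon of $sW^n+W^{n-1}+1$), but then use the product relation $\gamma_0\gamma_1\cdots\gamma_{n-1}=1$ in $\pi_1(\mathbf{P}^1\setminus D)$ to get $\tau_1\cdots\tau_{n-1}=\sigma_0^{-1}$, from which transitivity of $\langle\tau_1,\dots,\tau_{n-1}\rangle$ is automatic. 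Combined with the standard fact that a transitive subgroup of $S_n$ generated by transpositions is all of $S_n$, this finishes the argument without ever needing to identify any particular $\tau_i$; this sidesteps the sector analysis entirely, which is the most delicate part of the paper's proof. The two approaches require identical branch-locus bookkeeping (including the verification that $C$ is smooth above the $t_i$, equivalently that $f_t(t_i/n,t_i)=1-n\ne 0$, which you flag as a ``routine check''), so the real difference is in the endgame, where your use of the global relation is both simpler and more robust.
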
 

\begin{proof}  Without loss of generality, we may assume~$F$ is the complex numbers~$\C.$ Let~$$g(X,t) = f(X-t,-t) = X^n -tX^{n-1} -t.$$ It suffices to show that the splitting field of~$g(X,t)$ over~$\C(t)$ is an~$S_n$-extension. Let~${\pi: C_0  \rightarrow \mathbf{P}^1}$ be the \'etale morphism whose fiber above a point~$t_0 \in \C$ is the set of isomorphisms~$$\phi_t: \{0, \ldots, n-1\} \xrightarrow{\sim} \{r \in \C: g(r,t_0) = 0 \}.$$ Let~$C$ be a smooth, proper curve containing~$C_0,$ and let~$\pi:C \rightarrow \mathbf{P}^1$ be the map extending~${\pi: C_0 \rightarrow \mathbf{P}^1.}$  The splitting field of~$g$ is an~$S_n$-extension if and only if~$C$ is connected. We show the latter. 

We will analyze the monodromy around the branch points of~$\pi:C \rightarrow \mathbf{P}^1$. The cover~$C$ is ramified above the roots of 
\begin{align*} \Delta g(X,t)  &= n^n\prod_{ \substack{c \in \overline{\C(t)}, \\
\frac{\partial g}{\partial t}(c,t) = 0}} g(c,t)^{m_c} \\
&= n^ng(0,t)^{n-2}g(\frac{n-1}{n}t,t) \\&= n^n(-t)^{n-2}\left(\left(-\frac{1}{n} t  \right)\left(\frac{n-1}{n}t\right)^{n-1} -t\right) \\
&= n^n(-t)^{n-1}\left(\left(\frac{1}{n}  \right)\left(\frac{n-1}{n} t \right)^{n-1} + 1\right) \end{align*}
where~$m_c$ is the multiplicity of the critical point~$c$. Hence,~$\pi:C \rightarrow \mathbf{P}^1$ is branched at~$0$ and~$$\alpha_k := Me^{\frac{(2k+1)\pi i}{(n-1)} },$$ where~$k \in \{0,\ldots, n-2\}$ and~$M$ is a positive real number which is independent of~$k.$ Each of the branch points~$\alpha_k$ is simple.  One may check (though it is not relevant to our proof) that~$\pi: C \rightarrow \mathbf{P}^1$ is unramified at~$\infty;$ for a proof, see Lemma~\ref{infinity unramified}. We let~$D := \{0, \alpha_0, \ldots, \alpha_{n-2} \}$ denote the branch locus.  

Since~$g(X,t) = X^n - tX^{n-1} - t$ is~$t$-Eisenstein, it splits over~$\C[[t^{1/n}]].$ Observing that~$$t^{-1}g(Xt^{1/n},t) \equiv X^n - 1 \mod  t^{1/n},$$ it follows that each of the roots~$r$ of~$g$ in~$\C[[t^{1/n}]]$ satisfy~$$r = e^{2\pi i k /n}t^{1/n}  \mod t^{2/n}$$ for some unique value of~$k \in \{0, \ldots n-1 \}.$ Let~$pt_{\alpha_0 \rightarrow 0 }$ be the set~$(0, |\alpha_0|)\alpha_0 \in \C,$ i.e. the image of the straight line path from~$0$ to~$\alpha_0.$ Let~$s: pt_{\alpha_0 \rightarrow 0 } \rightarrow C$ be the unique holomorphic section of~$\pi: C \rightarrow \mathbf{P}^1$ such that~$$\lim_{t \rightarrow 0^+} \frac{s(t)(k)}{|s(t)(k)|} = e^{\frac{2\pi i k}{n}}e^{\frac{\pi i}{(n-1)n}} .$$ We consider the monodromy representation~$\varphi: \pi_1(\mathbf{P}^1 \setminus D, pt_{\alpha_0 \rightarrow 0 }) \rightarrow S_{n}~$  which maps a path~$p$ in~${\mathbf{P}^1 \setminus D}$ with endpoints in~${pt_{\alpha_0 \rightarrow 0 }}$ to~$\hat{p}(1)^{-1} \circ \hat{p}(0)$ where~$\hat{p}$ is the unique lift of~$p$ satisfying~${\hat{p}(0) = s(p(0)).}$ To show~$C$ is connected, it suffices to show~$\varphi$ is surjective. Our strategy will be to show that the generators of the symmetric group~$(0\text{ }1\text{ }2\ldots n-1)$ and~$(0\text{ }1)$ are contained in the image of~$\varphi.$ 

Consider a counterclockwise circular path~$p_0$ around~$0$ with endpoints in~$pt_{\alpha_0 \rightarrow 0 }.$ Since~$0$ is the only branch point contained in the circle bounded by~$p_0,$ the image of~$p_0$ under~$\varphi$ is the cycle~$(0\text{ }1\text{ }2\ldots n-1).$ Let~$p_1$ be a path with endpoint in~$pt_{\alpha_0 \rightarrow 0 }$ which bounds a punctured disk in~$\mathbf{P}^1 \setminus D$ around~$\alpha_0.$ Since the branch point~$\alpha_0$ is simple, the image of~$p_1$ under~$\varphi$ is a transposition. We claim~$\varphi(p_1) = (0\text{ }1).$ 

Let~$S$ be the set of complex numbers~$z$ which satisfies \begin{equation*} \frac{\pi}{n(n-1)} \leq \mathrm{Arg}(z) \leq  \frac{2\pi}{n} + \frac{\pi}{n(n-1)}. \end{equation*} Note that~$\alpha_0 \in S.$ Furthermore, observe the boundary rays of~$S$ are the two tangent directions by which the~$0$-th and~$1$-st root of~$g(X,t_0)$ (in the labeling given by the section s) converge to~$0.$  To show~$\varphi(p_1) = (0\text{ }1)$, we will demonstrate that 
\begin{enumerate} \item[$(\star)$] for all~$t _0 \in pt_{\alpha_0 \rightarrow 0 }$ there exists a unique pair of roots of~$g(X,t_0)$ contained in S.  \end{enumerate}

\noindent From~$(\star)$, one concludes by uniqueness~$\varphi(p_1) = (0\text{ }1).$ 

Since~$\alpha_0$ is a simple branch point contained in~$S$, when~$t_0$ is sufficiently close to~$\alpha_0$ there are at \emph{least} two roots in~$S$. On the other hand, as~$t_0$ approaches~$0,$ there is a unique pair of roots whose tangent directions are contained in~$S$. Hence for~$t_0$ sufficiently close to~$0,$ there are at \emph{most} two roots contained in~$S$. To prove~$(\star)$ for all~$t_0 \in pt_{\alpha_0 \rightarrow 0 },$ we will show that there is no value~$t_0 \in pt_{\alpha_0 \rightarrow 0 }$ such that~$g(X,t_0)$ has a root~$r$ whose argument equals~$\frac{\pi}{n(n-1)}$ or~$\frac{2\pi}{n} + \frac{\pi}{n(n-1)},$ i.e. roots cannot leave or enter the sector~$S$ as one varies~$t_0$ along~$pt_{\alpha_0 \rightarrow 0 }.$    

Assume for the sake of contradiction that there is a value~$t _0 \in pt_{\alpha_0 \rightarrow 0 }$ and a root~$r$ of~$g(X,t_0)$ such that~$ \mathrm{Arg}(r) = \frac{\pi}{n(n-1)}$ or~$ \mathrm{Arg}(r) = \frac{\pi}{n(n-1)} + \frac{2\pi }{n}.$ Then since~$g(r,t_0) = 0,$ one observes that~$$r^n = t_0(r^{n-1} + 1).$$ And so, \begin{align*} 
 \begin{split}
 \frac{\pi}{n-1} & \equiv \mathrm{Arg}(r^n) \mod 2\pi  \\ &\equiv \mathrm{Arg}(t_0) + \mathrm{Arg}(r^{n-1} +1) \mod 2\pi \\ &\equiv \frac{\pi}{n-1} + \mathrm{Arg}(r^{n-1} +1)  \mod 2\pi.
 \end{split} 
 \end{align*} 
 From which it follows~$\mathrm{Arg}(r^{n-1} +1)\equiv 0  \mod 2\pi.$ Note however, 
~$$\mathrm{Arg}(r^{n-1}) \equiv \begin{cases}  \frac{\pi}{n} \mod 2 \pi  &\mbox{if } \mathrm{Arg}(r) = \frac{\pi}{n(n-1)}, \text{ and} \\\
  2\pi -\frac{\pi}{n} \mod 2 \pi, &\mbox{if } \mathrm{Arg}(r) = \frac{\pi}{n(n-1)}. \end{cases}$$ Therefore,~$r^{n-1}$ is not a real number. It follows~$r^{n-1} +1$ is not real, and therefore has non-zero argument,  a contradiction. We conclude that there is no value~$t _0 \in pt_{\alpha_0 \rightarrow 0 }$ such that~$g(X,t_0)$ has a root with argument~$\frac{\pi}{n(n-1)}$ or~$\frac{\pi}{n(n-1)} + \frac{2\pi }{n}.$ Therefore,~$\varphi(p_1) = (0\text{ }1)$ and~$C$ is connected.   \end{proof}

We deduce our main theorem, Theorem~\ref{small}, via a Hilbert irreducibility argument.   
\paragraph{Proof of Theorem 1.2} 
If~$n\not\equiv 7 \mod 8$ or in the range~$3\leq n\leq 6$, then the theorem is a consequence of Theorem~\ref{long}. 

Assume that~$n\equiv 7 \mod 8$ or~$3\leq n\leq 6.$ Without loss of generality, we may assume~$E$ is a Galois extension of~$\Q.$ Let~$D$ be the unique positive, square-free integer which is divisible by the primes which ramify in~$E$ and those that divide~$n(n-1).$  In particular, note that~$2$ divides~$D$. Let~$B = D/(D, n-1).$ Consider the polynomial~$${h(X,t) = f(X,B^{-1}(1+Dt))  \in \Q[t,X].}$$ By Lemma~\ref{function field}, the polynomial~$h(X,t)$ has Galois group~$S_n$ over~$\Q(B^{-1}(1+Dt)) = \Q(t).$ Therefore by the Hilbert Irreducibility Theorem, there exists infinitely many values~$t_0\in\Z$ such that the splitting field~$K_{t_0}$ of~$h(X,t_0) = f(X,B^{-1}(1+Dt_0))$ is an~$S_n$-extension of~$\Q.$ Fix such a value~$t_0.$ We claim that there is a finite set~$L$ of prime integers which satisfy the following two conditions. \begin{enumerate} 
\item\label{L1} If~$l\in L,$ then~$l \nmid D.$ 
\item\label{L2} The closed, normal subgroup\footnote{the subgroup~$S_L$ is simply the absolute Galois group of the maximal extension of~$\Q$ in which all primes in~$L$ are unramified.}~$S_L \leqslant G_{\Q}$ generated by the inertia groups~$I_{l}$ for~$l\in L$ acts on the roots~$f(X, B^{-1}(1+Dt_0))$ as the full symmetric group~$S_n.$ \end{enumerate} 

Since there are no everywhere unramfied extensions of~$\Q,$ the set of primes which ramify in~$K_{t_0}$ satisfy Condition~$\ref{L2}.$ We show this set satisfies Condition~\ref{L1}, i.e. that~$K_{t_0}$  is unramified at all primes dividing~$D.$ 

Recall that~$D = B(D, n-1)$. If~$l$ divides~$B,$ then~$l$ is prime to~$n-1$ and the valuation~{$v_l(B^{-1}(1+Dt_0)) = -1.$} It follows by Lemma~\ref{infinity unramified}, that the extension~$L_{t_0}$ is unramified at~$l.$ On the other hand, if~$l$ divides~$n-1,$ then~$f(X,B^{-1}(1+Dt_0))$ has~$l$-integral coefficients and the discriminant:~\begin{align*} \Delta(f(X,B^{-1}(1+Dt_0)) &= n^n \prod_{c \in \overline{\Q}\text{ }  : \text{ } h'(c,t_0)=0} f(c,B^{-1}(1+Dt_0))^{m_c}\\ & =  n^n (B^{-1}(1+Dt_0))^{n-1}\left((B^{-1}(1+Dt_0))^{n-1}\left(\frac{1}{n} -1\right) + 1\right)\\ & \equiv B^{1-n} \mod l,   \end{align*} \noindent is prime to~$l.$ Hence,~$K_{t_0}$ is unramified at~$l.$ We conclude that~$K_{t_0}$ is unramified at all primes dividing~$D.$

To conclude the proof of the Theorem, we  perturb~$B^{-1}(1+Dt_0)$ in~$\prod_{l \in L} \Q_{l}$ to produce values of~$A$ for which~$f(X,A)$ has a surjective arboreal~$G_{E}$-representation. Let~$X_0$ denote the set of roots of~$f(X,B^{-1}(1+Dt_0))$ in~$\Qbar.$ Note that since the splitting field of~$f(X,B^{-1}(1+Dt_0))$ over~$\Q$ is~$S_n$-extension, the polynomial~$f(X,B^{-1}(1+Dt_0))$ is separable over~$\Q_l$. Let~$$\delta_l := \min \{|r_1 -r_2|_l: f(r_1,B^{-1}(1+Dt_0))=f(r_2,B^{-1}(1+Dt_0)) = 0 \text{ and } r_1 \neq r_2\}$$ be the minimum distance between a distinct pair of roots. By Krasner's Lemma, there exists an open ball~$U_l \subseteq \Q_{l}$ centered at~$B^{-1}(1+Dt_0)$ such that if~$A_l \in U_l$ and~$r$ is a root of~$f(X,B^{-1}(1+Dt_0)),$ then there is a unique root~$r(A_l)$ of~$f(X,A_l)$ such that~$|r - r(A_l)|_l <  \delta_l.$  Since the action of~$I_l$ on~$\Qbar_l$ preserves distances, the map~$r \mapsto r(A_l)$ is~$G_{\Q_l}$-equivariant. Identifying the set of roots of~$f(X,A_l)$ and~$f(X,B^{-1}(1+Dt_0))$ via this map, we see that for all~$A_l \in U_l$ the image of~$I_l$ in the symmetric group~$S_{X_0}$ is locally constant.

The group~$S_L$ is the normal closure of the group generated by the subgroups~$I_l$ for~$l \in L.$ Let~$U_L :=\prod_{l \in L} U_{l}.$ Since the action of~$S_L$ on~$X_0$ surjects onto~$S_{X_0},$ for all~$A \in U_L \cap \Q$ the permutation representation of~$S_L$ on the roots of~$f(X,A)$ is surjective. Since~$E$ is Galois and unramified at the primes in~$L$, the group~$G_E \leqslant G_\Q$ is normal and contains~$S_L.$ It follows that for any~$A \in U_L \cap \Q$ the splitting field of~$f(X,A)$ over~$E$ is an~$S_n$-extension.

We conclude the proof by showing that there are infinitely many values~$A \in U_L \cap \Q$ such that the arboreal Galois representation attached to~$f_{1,A}(X) := f(X,A)$ contains~$\Gamma(1).$ By Theorem~\ref{long}, it suffices show that there are infinitely many~$A \in U_L \cap \Q$ satisfying Hypotheses~\eqref{A1} -~\eqref{A7}. Let~$p_0$ and~$p_\infty$ be any choice of distinct primes which are greater than~$n,$ unramified in~$E,$ and not contained in~$L.$  Then Hypotheses~\eqref{A1} -~\eqref{A699} are open local conditions on~$A$ at the finite set of places dividing~$Dp_0p_{\infty}$ and~$\infty.$ In particular, they are conditions at places distinct from those in~$L.$ Let~$U_{\Gamma(1)}$ denote the open subset of~$\R \times \prod_{p|Dp_0p_{\infty}} \Q_p$ consisting of values which satisfy Hypotheses~\eqref{A1} -~\eqref{A699} locally. Let~$S$ denote the set of places~$$S := \{| \cdot |_p: p\in L ,\text{ or } p= \infty,  \text{ or } p | Dp_0p_{\infty} \}.$$ By weak approximation there are infinitely many values~$A_0 \in (U_{\Gamma(1)} \times U_L )\cap \Q.$ Fix any such value. Since~$U_{\Gamma(1)} \times U_L$ is open, there exists a real number~$\epsilon >0$ such that if~$|1-w|_{p} < \epsilon$ at all places in~$S,$ then~$wA_0 \in U_{\Gamma(1)} \times U_L.$ Fix such an~$\epsilon >0.$ Let~$M$ be a positive integer such that~$|M|_p < \epsilon$ at all \emph{finite} places~$|\cdot|_p \in S.$ If~$x$ is any positive integer which is
\begin{enumerate} \item\label{final1} not divisible by the primes contained in~$S,$ and \item\label{final2}  sufficiently large: specifically~$M/x< \epsilon,$\end{enumerate} then
$A_x:= \frac{x+M}{x}A_0 \in U_{\Gamma(1)} \times U_L,$ and therefore satisfies hypotheses~\eqref{A1} -~\eqref{A699}.  For such a value~$x \in \Z_{+},$ if one additionally asks that
\begin{enumerate}[resume] \item\label{final3}~$(x,A^+_0)=1$ and~$x \not\equiv \pm (A_0^{-})^{-1} \mod 8,$ \end{enumerate} then~$A_x^-\equiv A_0^-x  \not\equiv \pm 1\mod 8,$  and hence~$A_x$ satisfies hypothesis~\eqref{A7}. There are infinitely many~$x \in \Z_{+}$ satisfying conditions~\ref{final1},~\ref{final2}, and~\ref{final3}. For every such value, the arboreal~$G_E$-representation associated to~$f(X,A_x)$ is surjective. 
\section*{Acknowledgements}
The author would like to thank Nicole Looper for explaining her arguments in~\cite{looper2016dynamical}, the University of Chicago for its hospitality, and Mathilde Gerbelli-Gauthier for reading a preliminary draft of this work.

\bibliographystyle{amsalpha}
\bibliography{OdoniBib}
\end{document}